\theoremstyle{plain}
\newtheorem{theorem}{Theorem}[section]
\newtheorem{proposition}[theorem]{Proposition}
\newtheorem{corollary}[theorem]{Corollary}
\theoremstyle{remark}
\newtheorem{remark}[theorem]{Remark}
\theoremstyle{definition}
\newtheorem{definition}[theorem]{Definition}
\def\qed{\vrule height 5pt width 5pt depth 0pt}
\def\varinjlim_#1{\lim\limits_{\longrightarrow\atop{#1}}}
\def\Aut{\mathop{\rm Aut}\nolimits}
\def\id{\mathop{\rm id}\nolimits}
\def\U{\mathop{\rm U}\nolimits}
\def\PU{\mathop{\rm PU}\nolimits}
\def\K{\mathop{\rm K}\nolimits}
\def\diag{\mathop{\rm diag}\nolimits}
\def\im{\mathop{\rm im}\nolimits}
\def\BU{\mathop{\rm BU}\nolimits}
\def\BPU{\mathop{\rm BPU}\nolimits}
\begin{document}

\title{Morita bundle gerbes}

\author{Andrei V. Ershov}
\thanks{Partially supported by the RFBR (grant 14-01-00007-a
``Analytic methods in noncommutative geometry and topology'').}
\address{Moscow Institute of Physics and Technology, Dolgoprudny, Russia}
\email{ershov.andrei@gmail.com}

\begin{abstract}
The aim of this paper is to give a survey of the theory of bundle gerbes. In our approach we especially emphasize
the unifying role of Morita equivalences in this theory.
We also discuss a higher analog of Morita bundle gerbes called Morita 2-bundle gerbes.
\end{abstract}

\date{}
\maketitle

\section{Introduction}

The aim of this paper is to give a survey of the theory of bundle gerbes and some of their generalizations.
This paper does not contain new results excepting probably some proofs.
In our approach we emphasize the role of Morita equivalence, which plays the unifying role in this theory.
In particular, (see section \ref{eqcatLEndmod}) we show (after M. Karoubi \cite{Karoubi2}) that every module $E$ over a bundle gerbe $L$ defines
a (Morita) equivalence between the categories of $L$- and ${\rm End}(E)$-modules.

Probably, the main application of bundle gerbes is in the twisted $K$-theory.
The general idea of twisted cohomology is the following: if this cohomology
theory is represented by an $\Omega$-spectrum $E$, then the untwisted
cohomology of a space $X$ with coefficients $E$ is given by homotopy classes
of sections of the trivial bundle over $X$ with fiber $E$ (namely by
$[X,E]$). The twists are then the (possibly non-trivial) bundles $\mathcal{B}$
over $X$ with
fiber $E$. These have morphisms: the suitably defined bundle automorphisms,
and pullback makes this a functor on the category of spaces. The twisted
cohomology for a given twist $\mathcal{B}$ is defined as the homotopy classes
of sections of the bundle $\mathcal{B}$. Obviously, the details are a bit
messy and probably best carried out in the context of higher
categories. Details, in the context of K-theory, of such an approach are given
in \cite{ABG}, \cite{ABG2}, \cite{ABGHR} in the context of $\infty$-categories, and in a more
classical setting in \cite{MayS}.

This general approach lacks direct geometric
  interpretations. Therefore, often for subclasses of twists, other
  (equivalent) descriptions of twisted generalized cohomology, in particular
  of twisted K-theory, have been given.

An important remark has to be made here: Twisted cohomology
  requires much more
precise data than just an equivalence class of twists. Indeed, an axiomatic
framework might be given as follows: twists for K-theory on $X$ are given as
the objects of a (higher) groupoid $Tw(X)$. The above-mentioned equivalence
classes are the isomorphism classes of objects in the groupoid, but the
morphisms are equally important. In particular, twists in general have
non-trivial automorphisms. One would then require that $X\to Tw(X)$ forms a
contravariant functor from spaces to groupoids. Twisted K-theory would then be
a functor from $Tw(X)$ to abelian groups which is also functorial in $X$ in
the evident way, and which satisfies further axioms of a cohomology theory. In
particular, the automorphisms of a twist act (usually non-trivially) on the
corresponding twisted K-theory. In light of this, it does not really make
sense to talk about the twisted K-theory group for an equivalence class of
twists: only the isomorphism type of this group is well defined. A more
detailed description of this setup is given e.g.~in \cite[Section
3.1]{BSch}.

Twistings of $K(X)$ (where $X$ is a compact space) are classified
by homotopy classes of maps to the ``classifying space of bundles with fiber
the K-theory spectrum'', i.e.~by
\begin{equation}
\label{decompinflsp}
X\rightarrow {\rm
B}(\mathbb{Z}/2\mathbb{Z}\times {\rm BU}_\otimes)\simeq {\rm
K}(\mathbb{Z}/2\mathbb{Z},\, 1)\times {\rm BBU}_\otimes.
\end{equation}
 Because of the isomorphism ${\rm BU}_\otimes \cong {\rm
K}(\mathbb{Z},\, 2)\times {\rm BSU}_\otimes$ of spectra
\cite{MST, Segal}, twistings are classified by elements of the
group $H^1(\mathbb{Z}/2\mathbb{Z},\, 1)\times H^3(X,\,
\mathbb{Z})\times [X,\, {\rm BBSU}_\otimes]$.

Twistings corresponding to the first two factors
$H^1(\mathbb{Z}/2\mathbb{Z},\, 1)\times H^3(X,\, \mathbb{Z})$ were
studied by Karoubi \cite{Karoubi1}, Donovan and Karoubi
\cite{DK} in the finite order
case and by Rosenberg \cite{Rosenberg},  Atiyah and Segal \cite{AS1} in the
general case.
There is also the approach due to Bouwknegt, Carey, Mathai, Murray and
Stevenson
\cite{BCMMS} via bundle gerbes and modules over them which we are based on.
Note that, in line with the above comment,
twists in all these approaches are always some kind of explicit ``cocycle
representatives'' of the cohomology classes in question, to allow for a
functorial construction and the internal structure of automorphisms.
In particular, morphisms between bundle gerbes are precisely Morita equivalences,
this indicates their important role once again.

Twisted K-theory is of particular relevance as it appears
  naturally in string
theory: for space-times with background Neveu-Schwarz H-flux, the so-called
Ramond-Ramond charges of an associated field theory are rather
classified by twisted K-theory. This has been studied a lot in the context of
T-duality, where isomorphisms of twisted K-theory groups have been
constructed. The topological aspects of this are described e.g.~in
\cite{BSch,BSchR}.

\smallskip

{\noindent \bf Acknowledgments.\;}
I would like to express my deepest gratitude to Professor Doctor Thomas Schick
for numerous inspirational discussions and valuable contributions to some parts of this text.

\section{Bundle gerbes}

Bundle gerbes over a base space $X$ form a weak monoidal 2-groupoid. It is a
categorification of the group $H^3(X,\, \mathbb{Z})$ in the sense that there is
a natural isomorphism between this group and the group of equivalence classes of its objects
(the group operation is induced by the monoidal structure).
Our treatment of the higher versions of bundle gerbes generalizes the one of
(common) bundle gerbes and modules over them, so we start the paper with
a reminder of the corresponding results in a form suitable for our
purposes. For details compare \cite{BCMMS, Murray, MS}.
This section does not contain new results not to be found in these
references.

The aim of this section is to define the 2-category of bundle gerbes over $X$.
First, we define its objects, then its 1- and 2-morphisms and finally describe some of its properties.

\subsection{Definition of bundle gerbes.}

Let $X$ be a compact Hausdorff space, ${\mathcal U}=\{ U_\alpha
\}$  an open cover of $X$ indexed by a set $\{\alpha\}$.

\begin{definition}
A {\it bundle gerbe} $(L,\, \theta,\, \mathcal{U})$ is a
collection of (complex) line bundles $L_{\alpha \beta}\rightarrow U_{\alpha
\beta}$\footnote{$U_{\alpha_0\ldots \alpha_r}:=U_{\alpha_0}\cap \ldots \cap U_{\alpha_r}$}
together with isomorphisms $\theta_{\alpha \beta \gamma}
\colon L_{\alpha \beta}\otimes L_{\beta \gamma}\rightarrow
L_{\alpha \gamma}$ over $U_{\alpha \beta \gamma}$ with
associativity condition over four-fold overlaps, i.e. such that
the diagrams
\begin{equation}
\label{assoccond}
\begin{array}{c}
\diagram
L_{\alpha \beta}\otimes L_{\beta \gamma}\otimes L_{\gamma \delta}\rto^{\quad \id \otimes \theta_{\beta \gamma \delta}}
\dto_{\theta_{\alpha \beta \gamma}\otimes \id} & L_{\alpha \beta}
\otimes L_{\beta \delta}\dto^{\theta_{\alpha \beta \delta}} \\
L_{\alpha \gamma}\otimes L_{\gamma \delta}\rto^{\theta_{\alpha \gamma \delta}} & L_{\alpha \delta} \\
\enddiagram
\end{array}
\end{equation}
commute over $U_{\alpha \beta \gamma \delta}$\footnote{since every covering
$\mathcal{U}$ has a ``good'' refinement (i.e. all nonempty finite overlaps
are contractible)
and therefore the bundles $L_{\alpha \beta}$ are trivial, the main data of
a bundle gerbe $(L(g),\, \theta,\, \mathcal{U})$ is encoded by $\theta$.}.
\end{definition}

The composite maps
$$
L_{\alpha \alpha}\cong L_{\alpha \alpha}\otimes \mathbb{C}\stackrel{\id \otimes c}
{\longleftarrow}L_{\alpha \alpha}\otimes L_{\alpha \alpha}\otimes L_{\alpha \alpha}^*
\stackrel{\theta_{\alpha \alpha \alpha}\otimes \id}{\longrightarrow}
L_{\alpha \alpha}\otimes L^*_{\alpha \alpha}\stackrel{c}{\rightarrow}U_\alpha\times \mathbb{C},
$$
where $c$ is the contraction,
define isomorphisms $\tau_\alpha \colon L_{\alpha \alpha}\rightarrow U_\alpha \times \mathbb{C}.$
It is easy to verify that they make the following diagrams commutative
$$
\diagram
L_{\alpha \alpha}\otimes L_{\alpha \beta}\rto^{\theta_{\alpha \alpha \beta}} \dto_{\tau_\alpha \otimes \id} & L_{\alpha \beta}
\dlto^{=} &&
L_{\alpha \beta}\otimes L_{\beta \beta}\rto^{\id \otimes \tau_\beta} \dto_{\theta_{\alpha \beta \beta}} & L_{\alpha \beta}\dlto^{=} \\
L_{\alpha \beta} &&& L_{\alpha \beta},\\
\enddiagram
$$
hence the identifications $\tau_\alpha$ agree with the bundle gerbe structure.

Analogously, the composite maps
$$
L_{\alpha \beta}\cong L_{\alpha \beta}\otimes \mathbb{C}\stackrel{\id \otimes c}{\longleftarrow}
L_{\alpha \beta}\otimes L_{\beta \alpha}\otimes L^*_{\beta \alpha}\stackrel{\theta_{\alpha \beta \alpha}\otimes \id}
{\longrightarrow}L_{\alpha \alpha}\otimes L_{\beta \alpha}^*\stackrel{\tau_\alpha\otimes \id}{\longrightarrow}\mathbb{C}\otimes L_{\beta \alpha}^*
\cong L_{\beta \alpha}^*
$$
allow us to coherently identify $L_{\alpha \beta}$ with $L_{\beta \alpha}^*.$

\begin{remark}
Let us explain the heuristic behind this definition.
Let ${\mathcal Pic}:={\mathcal Pic}(\mathbb{C})$ be the Picard 2-group of the field
$\mathbb{C}$. Thus ${\mathcal Pic}$ is a weak 2-category with a unique
  object $\bullet_{\mathbb{C}}$
  (corresponding to the field $\mathbb{C}$) whose 1-morphisms are
  $(\mathbb{C},\, \mathbb{C})$-bimodules (the composition law is defined by the tensor product
  of bimodules)
  and bimodule isomorphisms serve as 2-morphisms (see subsection \ref{Alg1}).
We also have the (topological)
\v{C}ech groupoid $\check{\rm C}(\mathcal{U})$ associated with
the open cover $\mathcal{U}.$ Then a bundle gerbe is a weak 2-functor
$\check{\rm C}(\mathcal{U})\rightarrow {\mathcal Pic}$
to the Picard 2-group.
Indeed, to any object of $\check{\rm C}(\mathcal{U})$ we associate
the unique object $\bullet_{\mathbb{C}}$ in ${\mathcal Pic}$.
To morphisms $U_{\alpha \beta}$ in $\check{\rm C}(\mathcal{U})$
we associate 1-morphisms in ${\mathcal Pic}$
that form a line bundle $L_{\alpha \beta}.$
Since our functor is weak, it does not preserve the composition of morphisms on the nose,
but only up to 2-morphisms.
In other words, the ``discrepancy'' between composition of 1-morphisms
$U_{\alpha \beta}$ with $U_{\beta \gamma}$ and $U_{\alpha \gamma}$ corresponds to
the isomorphism $\theta_{\alpha \beta \gamma}\colon L_{\alpha \beta}\otimes L_{\beta \gamma}\rightarrow L_{\alpha \gamma}$ that is a family of 2-morphisms in ${\mathcal Pic}$. Thus, a bundle gerbe actually a
cocycle with values in ${\mathcal Pic}.$

Note that this heuristic is also helpful when we define 1-morphisms between bundle gerbes
(see the next subsection) which are precisely natural transformations between 2-functors.
\end{remark}

\subsection{The category of bundle gerbes.}

We can regard bundle gerbes over $X$ as objects of some weak monoidal 2-category $\mathcal{BG}(X)$ as follows.
{\it Objects} of $\mathcal{BG}(X)$ are bundle gerbes over $X$.

\begin{definition}
A {\it 1-morphism} $M\colon L\rightarrow L^\prime$
(where $L=(L,\, \theta,\, \mathcal{U}),\; L^\prime=(L^\prime,\, \theta^\prime,\, \mathcal{U}),$
$\mathcal{U}=\{ U_\alpha\}_\alpha$)
is a collection of line bundles $\{ M_{\alpha}\}\rightarrow U_\alpha$
together with isomorphisms
$\varphi_{\alpha \beta}\colon L_{\alpha \beta}\otimes
M_{\beta}\xrightarrow{\cong} M_{\alpha}\otimes L'_{\alpha \beta}$
over $U_{\alpha \beta}$ such that the diagram
\begin{equation}
\label{1morphdef}
\diagram
L_{\alpha \beta}\otimes M_\beta \otimes L'_{\beta \gamma} \ddto_{\varphi_{\alpha \beta}\otimes 1} &
L_{\alpha \beta}\otimes L_{\beta \gamma}\otimes M_\gamma \drto^{\theta_{\alpha \beta \gamma}\otimes 1}
 \lto_{1\otimes \varphi_{\beta \gamma}} \\
&& L_{\alpha \gamma}\otimes M_\gamma \dlto^{\varphi_{\alpha \gamma}} \\
M_\alpha \otimes L'_{\alpha \beta}\otimes L'_{\beta \gamma}\rto_{\quad 1\otimes \theta'_{\alpha \beta \gamma}} & M_\alpha \otimes L'_{\alpha \gamma}\\
\enddiagram
\end{equation}
commutes.
\end{definition}

Note that we have given the definition of 1-morphisms between
bundle gerbes over the same open cover $\mathcal{U},$
but there is no problem with the general case
because any two covers $\mathcal{U}$ and $\mathcal{V}$ have the common refinement
$\mathcal{W}=\{ W_{\alpha ; \lambda}\},\; W_{\alpha ;\lambda}:=U_\alpha \cap V_\lambda$ and a bundle gerbe $L=(L,\, \theta,\, \mathcal{U})$
defines the corresponding bundle gerbe over $\mathcal{W}$ by pullback (i.e. the restriction).

The composition of 1-morphisms is defined by tensor product.
More precisely, let $(N,\, \psi)\colon L^\prime \rightarrow L^{\prime \prime}$ be
a second 1-morphism, where $N=\{ N_{\alpha \beta}\},\; L^{\prime \prime}=(L^{\prime
  \prime},\,\theta^{\prime \prime},\, \mathcal{U}),\;
  \psi_{\alpha \beta}\colon L'_{\alpha \beta}\otimes N_\beta
  \stackrel{\cong}{\rightarrow} N_\alpha\otimes L''_{\alpha \beta}$.
Then
the compositions of isomorphisms
$$
L_{\alpha \beta}\otimes M_\beta \otimes N_\beta \xrightarrow{\varphi_{\alpha \beta}\otimes 1}
M_\alpha \otimes L'_{\alpha \beta}\otimes N_\beta \xrightarrow{1\otimes \psi_{\alpha \beta}}
M_\alpha \otimes N_\alpha \otimes L^{\prime \prime}_{\alpha \beta}
$$
shows that
$$
(\{ P_\alpha :=M_\alpha \otimes N_\alpha\},\, \{ \chi_{\alpha \beta}:=
(1\otimes \psi_{\alpha \beta})\circ (\varphi_{\alpha \beta}\otimes 1)\} )
$$
defines the required composition.

By definition, a {\it 2-morphism} $\omega \colon M\rightarrow M^\prime$ between
1-morphisms $(M,\, \varphi),\, (M^\prime,\, \varphi') \colon L\rightarrow L^\prime$
is a collection
$\{ \kappa_\alpha \}$
of isomorphisms of bundles $\{ M_{\alpha} \}\rightarrow
\{ M^\prime_{\alpha} \}$ that
make all diagrams like
$$
\diagram
L_{\alpha \beta}\otimes M_\beta \rto^{\varphi_{\alpha \beta}}\dto_{1\otimes \kappa_\beta} &
M_\alpha \otimes L'_{\alpha \beta}\dto^{\kappa_\alpha \otimes 1} \\
L_{\alpha \beta}\otimes M'_\beta \rto_{\varphi'_{\alpha \beta}}& M'_\alpha \otimes L'_{\alpha \beta}\\
\enddiagram
$$
commutative.

Note that the composition of 1-morphisms is not strictly associative but only up to 2-morphisms.
Analogously, $M\circ \id_L$ and $M$, $\id_{L^\prime}\circ N$ and $N$ are not equal but only
equivalent up to 2-morphisms. All these 2-morphisms form coherent families. Thus we have defined the {\it weak 2-category} $\mathcal{BG}(X)$.

\subsection{2-groupoid of bundle gerbes.}
Note that every 1-morphism is invertible (up to
2-morphism). Indeed, for a 1-morphism $(M,\, \varphi)\colon (L,\, \vartheta,\, \mathcal{U})\rightarrow
(L^\prime,\, \vartheta^\prime,\, \mathcal{U})$ as above define
the inverse morphism $(N,\, \psi)\colon (L^\prime,\, \vartheta^\prime,\, \mathcal{U})\rightarrow
(L,\, \vartheta,\, \mathcal{U})$ as follows.
Put $N_{\alpha}:=M_{\alpha}^*$ and define $\psi_{\alpha \beta}\colon L'_{\alpha \beta}\otimes N_\beta
\rightarrow N_\alpha \otimes L_{\alpha \beta}$ as the composite map
$$
L'_{\alpha \beta}\otimes M^*_\beta \cong M^*_\alpha \otimes M_\alpha
\otimes L'_{\alpha \beta}\otimes M_\beta^*\stackrel{1\otimes \varphi^{-1}_{\alpha \beta}
\otimes 1}{\longrightarrow}M^*_\alpha \otimes L_{\alpha \beta}\otimes M_\beta
\otimes M^*_\beta \cong M^*_\alpha \otimes L_{\alpha \beta}
$$
(where the isomorphisms $\cong$ are induced by canonical contractions).

The relations $N\circ M\cong 1_{L^\prime}$ and $M\circ N\cong 1_{L}$ follow from the definition.
Thereby we have defined a morphism
$N\colon L^\prime \rightarrow L$
and proven the following proposition.

\begin{proposition}
Bundle gerbes with respect to above defined 1-morphisms and 2-morphisms form a weak 2-groupoid $\mathcal{BG}(X)$.
\end{proposition}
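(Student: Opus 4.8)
The plan is to observe that nearly everything needed has already been set up in the preceding discussion, so the proof is a matter of assembling the pieces and checking invertibility in the two relevant dimensions. By construction $\mathcal{BG}(X)$ is a weak $2$-category: the $1$-morphisms, their tensor-product composition, the $2$-morphisms, and the coherent families of associators and unitors have all been written down above. Thus it remains only to verify the two defining features of a weak $2$-groupoid, namely that every $2$-morphism is an isomorphism and that every $1$-morphism is an equivalence.

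Invertibility of $2$-morphisms is immediate. A $2$-morphism $\omega=\{\kappa_\alpha\}\colon (M,\varphi)\to(M',\varphi')$ is by definition a family of \emph{isomorphisms} of line bundles $M_\alpha\to M'_\alpha$ compatible with the $\varphi$'s via the displayed square. Setting $\omega^{-1}:=\{\kappa_\alpha^{-1}\}$ and inverting the two vertical arrows in that square shows that $\omega^{-1}$ is again a $2$-morphism, now $M'\to M$, and it is a two-sided inverse to $\omega$ because this holds componentwise. Hence each hom-category $\mathcal{BG}(X)(L,L')$ is a groupoid.

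For the invertibility of $1$-morphisms up to $2$-isomorphism I would simply invoke the construction already carried out before the statement: given $(M,\varphi)\colon L\to L'$, the pair $(N,\psi)$ with $N_\alpha:=M_\alpha^*$ and $\psi_{\alpha\beta}$ the indicated composite is a $1$-morphism $L'\to L$ — that it satisfies the hexagon (\ref{1morphdef}) is obtained by tensoring the hexagon for $(M,\varphi)$ with the duals $M_\bullet^*$, inserting canonical contractions, and using the associativity constraint (\ref{assoccond}) for $L$ and $L'$. One then exhibits the $2$-isomorphisms $N\circ M\cong \id_L$ and $M\circ N\cong\id_{L'}$: the underlying bundles of $N\circ M$ are $M_\alpha^*\otimes M_\alpha$, which the contraction identifies canonically with $U_\alpha\times\mathbb{C}$, the underlying bundles of $\id_L$; one checks these contractions intertwine the composite gluing data $\chi_{\alpha\beta}$ of $N\circ M$ with the (trivial) gluing data of $\id_L$, which follows from naturality of the contraction together with the coherence of the $\varphi$'s already recorded. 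The case $M\circ N\cong\id_{L'}$ is symmetric.

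The only real work, and hence the main obstacle, is the coherence bookkeeping in the last step: keeping track of the tensor factors and of the repeated insertions of canonical associativity and contraction isomorphisms so that the squares defining the unit $2$-morphisms actually commute, and — if one wants an adjoint equivalence rather than a bare equivalence — arranging the triangle identities. The latter is automatic in any weak $2$-category (every equivalence can be promoted to an adjoint equivalence), so the genuine content is exactly the two $2$-isomorphisms $N\circ M\cong\id_L$ and $M\circ N\cong\id_{L'}$, whose verification reduces to pointwise identities about line bundles and their canonical pairings of the kind already implicit in the diagrams displayed just after the definition of a bundle gerbe. I do not expect any conceptual difficulty beyond this diagram chasing.
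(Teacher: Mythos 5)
Your proposal is correct and follows essentially the same route as the paper: the paper's argument for this proposition is precisely the construction of the weak inverse $(N,\psi)$ with $N_\alpha=M_\alpha^*$ and contraction-induced $\psi_{\alpha\beta}$, with the $2$-isomorphisms $N\circ M$ and $M\circ N$ to identities "following from the definition," which is exactly what you invoke and flesh out. Your added componentwise check that $2$-morphisms are invertible is the same (implicit) observation the paper relies on.
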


\subsection{Weak 3-group of bundle gerbes.}
\label{3gr}
There is yet another operation on bundle gerbes, their tensor product, which
equips the category $\mathcal{BG}(X)$ with the structure of a monoidal category.
More precisely, for two bundle gerbes
$(L,\, \theta,\, \mathcal{U}),\; (L^\prime,\, \theta^\prime,\, \mathcal{V})$
over $X$ their {\it tensor product} $(L\otimes L^\prime,\, \theta\otimes \theta^\prime,\,
\mathcal{W})$, where $\mathcal{W}=\{ W_{\alpha \lambda}\},$ $W_{\alpha \lambda}:=U_\alpha\cap V_\lambda$,
is defined by $(L\otimes L^\prime)_{\alpha \lambda,\, \beta \mu}:=L_{\alpha \beta}\otimes L^\prime_{\lambda \mu}$,
$$
(\theta \otimes \theta^\prime)_{\alpha\lambda, \beta\mu,\gamma\nu}=
\theta_{\alpha \beta \gamma}\otimes \theta^\prime_{\lambda \mu \nu}\colon
(L_{\alpha \beta}\otimes L^\prime_{\lambda \mu})\otimes(L_{\beta \gamma}\otimes L^\prime_{\mu \nu})\xrightarrow{\cong}
(L_{\alpha \gamma}\otimes L^\prime_{\lambda \nu}),
$$
etc.

This way, we have defined a monoidal $2$-category $\mathcal{BG}(X)$ of bundle
gerbes. In particular, its {\it unit object} is the {\it strictly trivial
bundle gerbe} $(T,\, \tau,\, {\mathcal T}),$ where the open cover ${\mathcal T}$
consists of one element $X,$ $T=X\times \mathbb{C}$ and $\tau \colon T\otimes T\rightarrow T$
is induced by the multiplication $\mathbb{C}\otimes \mathbb{C}\rightarrow \mathbb{C},\; 1\otimes 1\mapsto 1$
on complex numbers.

One can say even more about the monoidal $2$-category $\mathcal{BG}(X)$: every object is invertible
up to 1-morphism in the sense that for every bundle gerbe $(L,\,\theta,\, {\mathcal U})$ there is a bundle
gerbe $(L^\prime,\, \theta^\prime,\, {\mathcal U})$ such that $L\otimes L^\prime$ and $L^\prime \otimes L$
are equivalent to $(T,\, \tau,\, {\mathcal T})$ as bundle gerbes.
In order to construct $(L^\prime,\, \theta^\prime,\, {\mathcal U})$,
put $L^\prime_{\alpha \beta}:=L_{\beta \alpha}$, then we define $\theta^\prime_{\alpha \beta \gamma}\colon L_{\alpha \beta}^\prime \otimes
L_{\beta \gamma}^\prime \rightarrow L_{\alpha \gamma}^\prime$ as
$$
L_{\beta \alpha}\otimes L_{\gamma \beta}\cong L_{\gamma \beta}\otimes L_{\beta \alpha}\stackrel{\theta_{\gamma \beta \alpha}}{\rightarrow}L_{\gamma \alpha}.
$$
Then we have isomorphisms
$$
L_{\beta \beta}|_{U_\alpha}\cong L_{\beta \alpha}\otimes L_{\alpha \beta}=L_{\alpha \beta}^\prime \otimes L_{\alpha \beta}
\cong L_{\alpha \beta}\otimes L_{\alpha \beta}^\prime
=L_{\alpha \beta}\otimes L_{\beta \alpha}\cong L_{\alpha \alpha}|_{U_\beta}.
$$

Now using a standard trick \cite{Baez}, this monoidal 2-category $\mathcal{BG}(X)$ can be
reinterpreted as a weak 3-groupoid with one object, i.e. a weak 3-group
whose 1-morphisms are bundle gerbes (with strictly trivial gerbe as the
unit and tensor product as the composition),
2-morphisms are 1-morphisms between bundle gerbes and 3-morphisms are 2-morphisms in the previous sense.

\subsection{Functoriality.}
For a map $f\colon X\rightarrow Y$ and a bundle gerbe
$(L^\prime,\, \theta^\prime,\, \mathcal{V}),$ on $Y$ where
$\mathcal{V}=\{ V_\lambda\}$ is an open covering of $Y$, one can
define the pullback $f^*(L^\prime,\, \theta^\prime,\,
\mathcal{V})$ which is a bundle gerbe on $X$ in the obvious way.
One can show that $f^*$ defines a weak monoidal 2-functor
$\mathcal{BG}(Y)\rightarrow \mathcal{BG}(X)$ (cf. \cite{BSch}).

\subsection{A counterpart from Algebra: Brauer-Picard 3-group.}
\label{Alg1}
In fact, $\mathcal{BG}(X)$ is a topological analog of the following
monoidal 2-category ${\mathcal Pic Br}(R)$
of a commutative unital ring $R$. Recall its definition \cite{Baez}.
Consider the monoidal 2-category ${\mathcal Alg}(R)$.
Its objects $A$ are associative algebras
over $R$, the monoidal structure is given by their tensor product over $R$.
Its 1-morphisms $M\colon A\rightarrow B$ are $(A,\, B)$-bimodules $M.$
The composition of 1-morphisms $M\colon A\rightarrow B,\; N\colon B\rightarrow C$ is given
by the tensor product $M{\mathop{\otimes}\limits_{B}}N$ of bimodules over $B$.
Its 2-morphisms $f\colon M\rightarrow M^\prime$ are homomorphisms of $(A,\, B)$-bimodules.
Thereby we have defined the monoidal 2-category ${\mathcal Alg}(R)$.

A 2-morphism $f\colon M\rightarrow M^\prime$ is an equivalence
if and only if it is an $(A,\, B)$-bimodule isomorphism.
A 1-morphism $M\colon A\rightarrow B$ is an equivalence if and only if it is invertible up to isomorphisms,
i.e. $\exists N\colon B\rightarrow A$ such that $M{\mathop{\otimes}\limits_{B}}N\cong A,\;
N{\mathop{\otimes}\limits_{A}}M\cong B$
as $(A,\, A)$- and $(B,\, B)$-bimodules respectively. That is $M$ is a
Morita-equivalence bimodule.

Consider the subcategory ${\mathcal Pic Br}(R)\subset
{\mathcal Alg}(R)$ whose objects are Azumaya algebras over $R$,\footnote{an associative unital
$R$-algebra $A$ is
an Azumaya algebra if
there is an associative unital $R$-algebra $B$ such that
$A{\mathop{\otimes}\limits_{R}}B$ and $B{\mathop{\otimes}\limits_{R}}A$
are Morita-equivalent to $R$ as associative algebras over $R$.}
1-morphisms are Morita-equivalences and 2-morphisms are bimodule isomorphisms.
Then ${\mathcal Pic Br}(R)$
is a monoidal 2-groupoid.
Its group of equivalence classes of objects (i.e. the group
of Azumaya algebras up to Morita equivalence)
is the Brauer group $Br(R).$ The group of equivalence classes of 1-morphisms
$R\rightarrow R$ (where $R$ is regarded as an associative algebra over $R$),
i.e. the group of Morita equivalences from $R$ to $R$, is the Picard group $Pic(R)$.
The group of equivalence classes of 2-morphisms $R\rightarrow R$
(where this time $R$ is regarded as an $(R,\, R)$-bimodule) is the unit group of $R$.

Again, using the monoidal structure on ${\mathcal Pic Br}(R)$ we can reinterpret it as a weak 3-group
with Azumaya algebras as 1-morphisms (and the $R$-algebra $R$ as the unit object), etc.

For example, one can take $R=C(X)$ for compact $X$ and obtain
the corresponding contravariant functor $X\mapsto {\mathcal Pic Br}(X):={\mathcal Pic Br}(C(X))$
from the homotopy category to the category of weak 2-groupoids (or weak 3-groups).

We see that for a space $X$ the
monoidal 2-category $\mathcal{BG}(X)$ is an analog of ${\mathcal Pic Br}(R)$. Indeed,
as we have shown,
its objects $L$, the bundle gerbes over $X$, are invertible (up to 1-morphisms)
because for every bundle gerbe $L$ there exists a bundle gerbe
$L^\prime$ such that $L\otimes L^\prime$ and $L^\prime \otimes L$
are equivalent to the strictly trivial bundle gerbe.
So 1-morphisms of $\mathcal{BG}(X)$ are akin to Morita equivalences.

\subsection{Classification of bundle gerbes. Dixmier-Douady class.}
\label{DDclasssubsec}
It is well known that bundle gerbes up to equivalence are classified by their Dixmier-Douady class.
We recall its definition:
take ${\mathcal U}$ a good cover, then choose sections
$\sigma_{\alpha \beta}$ of the Hermitian line bundles $L_{\alpha
\beta}\rightarrow U_{\alpha \beta}$ whose modulus is equal to 1 in
each fiber. Then over $U_{\alpha \beta \gamma}$ we have:
$$
\theta_{\alpha \beta \gamma}(\sigma_{\alpha \beta}\otimes
\sigma_{\beta \gamma})=\lambda_{\alpha \beta \gamma}\sigma_{\alpha
\gamma}
$$
for some functions $\lambda_{\alpha \beta \gamma}\colon U_{\alpha
\beta \gamma}\rightarrow {\rm U}(1),$ and the associativity
condition (\ref{assoccond}) implies that $\lambda =\{
\lambda_{\alpha \beta \gamma}\}$ is a \v{C}ech $2$-cocycle with
coefficients  in $\underline{{\rm U}(1)}$, the sheaf of germs of
continuous ${\rm U}(1)$-valued functions. Consider the coboundary
homomorphism
$$
\delta \colon H^2(X,\, \underline{{\rm U}(1)})\rightarrow H^3(X,\,
\mathbb{Z})
$$
 of the long exact cohomology sequence associated
with the short exact sequence of sheaves
$$
0\rightarrow \mathbb{Z}\rightarrow
\underline{\mathbb{R}}\stackrel{exp(2\pi i\ldots
)}{\longrightarrow}\underline{{\rm U}(1)}\rightarrow 1.
$$
In fact, $\delta$ is an isomorphism because $\underline{\mathbb{R}}$
is a fine sheaf, and hence $H^i(X,\, \underline{\mathbb{R}})=0$
for $i\geq 1$.
We define the Dixmier-Douady class $DD(L(g),\, \theta,\, \mathcal{U})$ as
$\delta([\lambda])$, where $[\lambda]\in H^2(X,\, \underline{{\rm
U}(1)})$ is the cohomology class of the cocycle $\lambda.$

It follows from diagram (\ref{1morphdef}) that an equivalence
between two bundle gerbes induces an equivalence between their
\v{C}ech cocycles. Indeed, if $\varphi_{\alpha \beta}(\sigma_{\alpha \beta}\otimes s_\beta)=
\mu_{\alpha \beta} s_\alpha \otimes \sigma'_{\alpha \beta}$
(where $s_\alpha$ is a section of $M_\alpha$) for some
functions $\mu_{\alpha \beta}\colon U_{\alpha \beta}\rightarrow {\rm U}(1)$,
then two ways in diagram (\ref{1morphdef}) give equality
$$
\mu_{\alpha \beta}\lambda'_{\alpha \beta \gamma}s_\alpha \otimes \sigma'_{\alpha \beta}=
\mu_{\beta \gamma}^{-1}\lambda_{\alpha \beta \gamma}
\mu_{\alpha \gamma}s_\alpha \otimes \sigma'_{\alpha \gamma}.
$$
Moreover, bundle gerbes are
equivalent if and only if they have the same Dixmier-Douady class.

So for the monoidal 2-category $\mathcal{BG}(X)$ we have:
\begin{itemize}
\item[(i)]
the group of equivalences classes of objects is the topological Brauer
group $Br(X)\cong H^3(X,\, \mathbb{Z})$;
\item[(ii)]
the group of equivalences classes of 1-isomorphisms of the strictly trivial bundle gerbe $T$
is the Picard group $Pic(X)\cong H^2(X,\, \mathbb{Z})$. Indeed, it is easy to see that a 1-morphism
$T\rightarrow T$ is just a line bundle $M\rightarrow X.$
\end{itemize}

\begin{remark}
Thus we see that for the strictly trivial bundle gerbe $T$ we have $\Aut(T)\cong Pic(X)\cong H^2(X,\, \mathbb{Z}).$
But this is true for any bundle gerbe $L$. Indeed, for a given $1$-morphism $M\colon T\rightarrow T$ (i.e. a line bundle)
the tensor product
$M\otimes \id_L$
is a morphism $L\cong T\otimes L\rightarrow T\otimes L\cong L$.
Conversely using subsection \ref{3gr} to a morphism $N\colon L\rightarrow L$ we assign
a morphism $T\cong L\otimes L^{-1}\rightarrow L\otimes L^{-1}\cong T$, namely $N\otimes \id_{L^{-1}}$
and show that these two correspondences are inverse to each other.
\end{remark}

\subsection{Trivializations.}
\begin{definition}
A {\it right trivialization} of a bundle gerbe
$L=(L,\, \theta,\, \mathcal{U})$ is a 1-morphism $\eta \colon L\rightarrow T$ to a strictly
trivial bundle gerbe $T=(T,\, \tau,\, {\mathcal T})$. Similarly, a {\it left trivialization}
of $L$ is a 1-morphism $\kappa \colon T\rightarrow L.$
\end{definition}

It immediately follows from the definition that such a right trivialization $(\eta,\, \varphi,\, \mathcal{U})$
consists of a collection of line bundles $\eta_\alpha \rightarrow U_\alpha$
and isomorphisms $\varphi_{\alpha \beta}\colon L_{\alpha \beta}\otimes \eta_\beta
\rightarrow \eta_\alpha$ over $U_{\alpha \beta}$ such that diagrams
\begin{equation}
\label{rtrivdef}
\diagram
L_{\alpha \beta}\otimes L_{\beta \gamma}\otimes \eta_\gamma
\rto^{\id \otimes \varphi_{\beta \gamma}}
\dto_{\theta_{\alpha \beta \gamma}\otimes \id} & L_{\alpha \beta}\otimes \eta_\beta
\dto^{\varphi_{\alpha \beta}} \\
L_{\alpha \gamma}\otimes \eta_\gamma \rto^{\varphi_{\alpha \gamma}} & \eta_\alpha \\
\enddiagram
\end{equation}
commute over $U_{\alpha \beta \gamma}$. Similarly for a left trivialization.

\smallskip

Assume now that there are right $\eta \colon L\rightarrow T$,
$\eta :=(\eta,\, \varphi ,\, \mathcal{U})$ and left $\kappa \colon T\rightarrow L$, $\kappa :=(\kappa,\, \psi ,\, \mathcal{U})$ trivializations
of $L:=(L,\, \theta,\, \mathcal{U}).$
We have isomorphisms over $U_{\alpha \beta}$
$$
\id\otimes \varphi_{\alpha \beta}\colon
\kappa_\alpha \otimes L_{\alpha \beta}\otimes \eta_\beta
\stackrel{\cong}{\rightarrow}\kappa_\alpha \otimes \eta_\alpha
$$
and
$$
\psi_{\alpha \beta}\otimes \id \colon \kappa_\alpha \otimes L_{\alpha \beta}\otimes \eta_\beta
\stackrel{\cong}{\rightarrow}\kappa_\beta \otimes \eta_\beta.
$$
Over threefold overlaps $U_{\alpha \beta \gamma}$ we have commutative diagrams
$$
\diagram
& \kappa_\beta \otimes \eta_\beta & \\
\kappa_\beta\otimes L_{\beta \gamma}\otimes \eta_\gamma
\urto^{\id \otimes \varphi_{\beta \gamma}} \dto_{\psi_{\beta \gamma}\otimes \id} &
\kappa_\alpha \otimes L_{\alpha \beta}\otimes L_{\beta \gamma}\otimes \eta_\gamma
\lto_{\psi_{\alpha \beta}\otimes \id} \dto^{\id \otimes \theta_{\alpha \beta \gamma}\otimes \id}
\rto^{\id \otimes \varphi_{\beta \gamma}} &
\kappa_\alpha \otimes L_{\alpha \beta}\otimes \eta_\beta \ulto_{\psi_{\alpha \beta}\otimes \id}
\dto^{\id \otimes \varphi_{\alpha \beta}} \\
\kappa_\gamma \otimes \eta_\gamma &
\kappa_\alpha \otimes L_{\alpha \gamma}\otimes \eta_\gamma \lto_{\psi_{\alpha \gamma}\otimes \id}
\rto^{\id \otimes \varphi_{\alpha \gamma}} &
\kappa_\alpha \otimes \eta_\alpha. \\
\enddiagram
$$
Hence the line bundles $\kappa_\alpha \otimes \eta_\alpha$ together
with the isomorphisms
$\chi_{\alpha \beta}:=(\psi_{\alpha \beta}\otimes \id)\circ (\id\otimes \varphi_{\alpha \beta })^{-1}$
descent to a (``global'') line bundle on $X$.
In other words, {\it two trivializations of the same bundle gerbe differ
by a line bundle}. Note that the obtained result agrees with the previous category-theoretic
arguments: the composition $\eta \circ \kappa \colon T\rightarrow T$ is
a 1-automorphism of the strictly trivial bundle gerbe $T$, i.e. a line bundle.

\begin{remark}
The obtained connection between trivializations and line bundles can also be illustrated by \v{C}ech cohomology as follows.
Note that a bundle gerbe admits a trivialization iff its Dixmier-Douady class is trivial.
Indeed, it follows from diagram (\ref{rtrivdef}) that
a trivialization of the bundle gerbe $(L,\, \vartheta ,\, {\mathcal U})$ gives rise to a trivialization $\{ \mu_{\alpha \beta}\}$ of the corresponding
\v{C}ech $2$-cocycle $\{ \lambda_{\alpha \beta \gamma}\}$ (with respect to a good cover ${\mathcal U}=\{ U_\alpha \}$) with values in $\U(1).$
If $\{ \nu_{\alpha \beta}\}$ is another such trivialization then $\zeta_{\alpha \beta}:=\nu_{\alpha \beta}\mu^{-1}_{\alpha \beta}$
form \v{C}ech $1$-cocycle which gives rise to a line bundle.
\end{remark}

\subsection{Morita bundle gerbes}
\label{Morbung}

There is a generalization of the notion of a bundle gerbe related to the Brauer-Picard
2-groupoid (whose objects are Azumaya algebras, 1-morphisms Morita equivalences (bimodules)
between them and 2-morphisms are isomorphisms of bimodules). More precisely,
we have the following definition.

\begin{definition}
\cite{Pennig}
A \textit{Morita bundle gerbe} (MBG for short) $(A,\, M,\, \theta,\, \mathcal{U})$ is the following
collection of data.
First, we have matrix algebra bundles $A_\alpha \rightarrow U_\alpha,$
then invertible $(A_\beta ,\, A_\alpha)$-bimodules $_\beta M_\alpha$ (Morita-equivalences
between $A_\alpha|_{U_{\alpha \beta}}$ and $A_{\beta}|_{U_{\alpha \beta}}$),
then $(A_\gamma ,\, A_\alpha)$-bimodule isomorphisms $\theta_{\alpha \beta \gamma}\colon
_\gamma M_{\beta}{{\mathop{\otimes}\limits_{A_\beta}}} {_\beta M_\alpha}\rightarrow {_\gamma M_\alpha}$
corresponding to diagrams
$$
\diagram
A_\alpha\rto^{_\beta M_\alpha}\drto_{_\gamma M_\alpha}&A_\beta \dto^{_\gamma M_\beta}\\
& A_\gamma \\
\enddiagram
$$
which satisfy relations
$$
\theta_{\alpha \gamma \delta}\circ (1\otimes \theta_{\alpha \beta \gamma})=
\theta_{\alpha \beta \delta}\circ (\theta_{\beta \gamma \delta}\otimes 1)
$$
over four-fold overlaps.
The last relations correspond to diagrams
$$
\diagram
{_\delta M_\gamma}{\mathop{\otimes}\limits_{A_{\gamma}}}{_\gamma M_\beta}
{\mathop{\otimes}\limits_{A_{\beta}}}{_\beta M_\alpha}
\rto^{\quad 1\otimes \theta_{\alpha \beta \gamma}}
\dto_{\theta_{\beta \gamma \delta}\otimes 1} &
{_\delta M_\gamma}{\mathop{\otimes}\limits_{A_{\gamma}}}
{_\gamma M_\alpha}\dto^{\theta_{\alpha \gamma \delta}}\\
{_\delta M_\beta}{\mathop{\otimes}\limits_{A_{\beta}}}
{_\beta M_\alpha}\rto_{\theta_{\alpha \beta \delta}} & {_\delta M_\alpha}. \\
\enddiagram
$$
\end{definition}

\begin{definition}
\label{onemorpdef}
A 1-morphism $(N,\, \varphi)$ between MBG's $(A,\, M,\, \theta,\, \mathcal{U})$
and $(A',\, M',\, \theta',\, \mathcal{U})$ consists of $(A_\alpha ,\, A'_\alpha)$-bimodules
$N_\alpha$ and $(A'_\beta,\, A_\alpha)$-bimodule isomorphisms
$\varphi_{\alpha \beta}\colon {_\beta M'_\alpha}{\mathop{\otimes}\limits_{A'_{\alpha}}}N_\alpha
\cong N_\beta{\mathop{\otimes}\limits_{A_{\beta}}}{_\beta M_\alpha}$
corresponding to diagrams
$$
\diagram
A_\alpha \dto_{_\beta M_\alpha}\rto^{N_\alpha} & A'_\alpha\dto^{_\beta M'_\alpha}\\
A_\beta \rto^{N_\beta} & A'_\beta \\
\enddiagram
$$
and such that diagrams
$$
\diagram
{_\gamma M'_\beta}{\mathop{\otimes}\limits_{A'_{\beta}}}{_\beta M'_\alpha}
{\mathop{\otimes}\limits_{A'_{\alpha}}}N_\alpha \rto^{1\otimes \varphi_{\alpha \beta}}
\dto_{\theta'_{\alpha \beta \gamma}\otimes 1} & {_\gamma M'_\beta}
{\mathop{\otimes}\limits_{A'_{\beta}}}N_\beta
{\mathop{\otimes}\limits_{A_{\beta}}} {_\beta M_\alpha}\dto^{\varphi_{\beta \gamma}\otimes 1} \\
{_\gamma M'_\alpha}{\mathop{\otimes}\limits_{A'_{\alpha}}}N_\alpha \drto_{\varphi_{\alpha \gamma}} &
N_\gamma {\mathop{\otimes}\limits_{A_{\gamma}}}{_\gamma M_\beta}
{\mathop{\otimes}\limits_{A_{\beta}}}{_\beta M_\alpha} \dto^{1\otimes \theta_{\alpha \beta \gamma}} \\
& N_\gamma {\mathop{\otimes}\limits_{A_{\gamma}}}{_\gamma M_\alpha}\\
\enddiagram
$$
commute.

2-morphisms $\psi \colon (N,\, \varphi)\Rightarrow (N',\, \varphi')$
between 1-morphisms $(N,\, \varphi),\; (N',\, \varphi')\colon
(A,\, M,\, \theta ,\, \mathcal{U})\rightarrow (A',\, M',\, \theta' ,\, \mathcal{U})$
are bimodule isomorphisms which make
all structure diagrams commutative. More precisely, for all $\alpha$ we have
isomorphisms $\psi_\alpha \colon N_\alpha \rightarrow N'_\alpha$
of $(A'_\alpha ,\, A_\alpha)$-bimodules such that diagrams
$$
\diagram
{_\beta M'_\alpha}{\mathop{\otimes}\limits_{A'_{\alpha}}}N_\alpha
\rto^{\varphi_{\alpha \beta}} \dto_{1\otimes \psi_\alpha}  & N_\beta{\mathop{\otimes}\limits_{A_{\beta}}}{_\beta M_\alpha}
\dto^{\psi_\beta \otimes 1} \\
{_\beta M'_\alpha}{\mathop{\otimes}\limits_{A'_{\alpha}}}N_\alpha
\rto_{\varphi_{\alpha \beta}} & N_\beta{\mathop{\otimes}\limits_{A_{\beta}}}{_\beta M_\alpha}\\
\enddiagram
$$
commute.
\end{definition}

By $\mathcal{MBG}(X)$ denote the monoidal 2-groupoid (3-group)
of Morita bundle gerbes over $X$.

Note that $\mathcal{MBG}(X)$ is a monoidal category with monoidal structure
induced by the tensor product of MBG's. Its unit object is the stricty trivial bundle gerbe.

Note also that the 2-groupoid $\mathcal{BG}(X)$ is a full subcategory in $\mathcal{MBG}(X).$
Moreover, this inclusion is an equivalence of 2-categories,
because the natural inclusion of the Picard 2-group ${\mathcal Pic}$ to the
Brauer-Picard 2-groupoid is an equivalence of 2-categories. But we can give an
independent proof of this result.

\begin{proposition}
The inclusion $\mathcal{BG}(X)\rightarrow \mathcal{MBG}(X)$
of the category of ``common'' bundle gerbes
to the category of Morita bundle gerbes is an equivalence.
\end{proposition}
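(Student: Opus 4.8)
The plan is to verify that the inclusion functor $\iota\colon \mathcal{BG}(X)\to\mathcal{MBG}(X)$ is both essentially surjective (on objects, up to 1-equivalence) and fully faithful (on 1- and 2-morphisms, up to the appropriate level of equivalence). Since $\mathcal{BG}(X)$ is, by construction, the full sub-2-category of $\mathcal{MBG}(X)$ whose objects are those Morita bundle gerbes in which each $A_\alpha$ is the trivial rank-one algebra bundle $U_\alpha\times\mathbb{C}$ and each ${}_\beta M_\alpha$ is a line bundle, fullness and faithfulness are automatic; the entire content lies in essential surjectivity. So the main task is: given an arbitrary MBG $(A,\,M,\,\theta,\,\mathcal{U})$, produce an ordinary bundle gerbe $(L,\,\vartheta,\,\mathcal{V})$ together with a 1-equivalence between them in $\mathcal{MBG}(X)$.

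The key step is a local trivialization of the algebra bundles. After passing to a good refinement $\mathcal{V}$ of $\mathcal{U}$ we may assume each $A_\alpha$ is a \emph{trivial} matrix-algebra bundle, $A_\alpha\cong U_\alpha\times\Matr_{n_\alpha}(\mathbb{C})$; moreover, by splitting indices further if necessary, we may take all $n_\alpha$ equal to a common $n$ (or, alternatively, pass to a ``stabilized'' picture). Now for a trivial algebra bundle $U\times\Matr_n(\mathbb{C})$ every invertible bimodule over it is of the form $\Hom_{\mathbb{C}}(\mathbb{C}^n,\,\mathbb{C}^n)\otimes(\text{line bundle})$, i.e.\ Morita-trivial up to twist by a line bundle: concretely, the ``column module'' $\mathbb{C}^n$ gives a fixed Morita equivalence $\mathbb{C}\simeq\Matr_n(\mathbb{C})$, and tensoring identifies ${}_\beta M_\alpha$ over $U_{\alpha\beta}$ with a line bundle $L_{\alpha\beta}$. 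One then \emph{defines} $L=(L_{\alpha\beta})$ and transports the bimodule isomorphisms $\theta_{\alpha\beta\gamma}$ through these identifications to get line-bundle isomorphisms $\vartheta_{\alpha\beta\gamma}\colon L_{\alpha\beta}\otimes L_{\beta\gamma}\to L_{\alpha\gamma}$; the MBG associativity relation over fourfold overlaps becomes exactly the bundle-gerbe cocycle condition (\ref{assoccond}). Finally, the chosen trivializations of the $A_\alpha$ together with the column modules assemble into the data of a 1-morphism $(N,\,\varphi)$ from $(A,\,M,\,\theta)$ to $\iota(L,\,\vartheta)$ in the sense of Definition~\ref{onemorpdef}, with each $N_\alpha$ an $(A_\alpha,\,\mathbb{C})$-bimodule (the column module), and $\varphi_{\alpha\beta}$ induced by the identifications; this 1-morphism is an equivalence because each $N_\alpha$ is a Morita-equivalence bimodule.

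The coherence bookkeeping — checking that the $\varphi_{\alpha\beta}$ satisfy the hexagon in Definition~\ref{onemorpdef} and that the various identifications are natural — is routine diagram-chasing, entirely parallel to the verifications already carried out in the earlier subsections for ordinary bundle gerbes; I would not spell it out. The one genuine subtlety, and the main obstacle, is the bookkeeping of ranks: an MBG allows the matrix algebras $A_\alpha$ to have varying ranks $n_\alpha$, and the column modules $N_\alpha$ then have varying ranks, so the resulting 1-morphism is not literally ``a collection of line bundles'' but must be normalized. The clean way around this is to observe that the construction above produces, for any MBG, an equivalence to the image of a bundle gerbe, which is all that essential surjectivity requires; alternatively one invokes that over a compact base any matrix algebra bundle is, after refinement, isomorphic to $\mathrm{End}$ of a genuine vector bundle, reducing the question to the fact (used implicitly throughout, cf.\ the heuristic remark after the definition of bundle gerbe) that the inclusion $\mathcal{Pic}\hookrightarrow\mathcal{PicBr}(\mathbb{C})$ is an equivalence of 2-groupoids, applied fiberwise and coherently over $X$. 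Combining essential surjectivity with the automatic fullness and faithfulness gives the asserted equivalence of 2-categories.
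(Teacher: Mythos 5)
Your proposal is correct and follows essentially the same route as the paper: over a good cover one fixes Morita equivalences $\xi_\alpha\colon A_\alpha\to U_\alpha\times\mathbb{C}$ (your column modules), sets $L_{\alpha\beta}:=\xi_\beta\otimes_{A_\beta}{}_\beta M_\alpha\otimes_{A_\alpha}\xi_\alpha^{-1}$, transports $\theta$ to the line-bundle cocycle $\theta'$, and uses the $\xi_\alpha$ themselves as the equivalence 1-morphism, with fullness/faithfulness automatic for the full subcategory. (Your worry about equalizing the ranks $n_\alpha$ is unnecessary: the conjugated bimodule is an invertible $(\mathbb{C},\mathbb{C})$-bimodule, hence a line bundle regardless of the local ranks, exactly as you conclude.)
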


{\noindent \it Proof}. We must show that any MBG $(A,\, M,\, \theta ,\, \mathcal{U})$
is equivalent to a ``common'' bundle gerbe $(L,\, \theta',\, \mathcal{U}).$

Assume that the cover $\mathcal{U}$ is good.
Fix Morita-equivalences $\xi_\alpha \colon A_\alpha \rightarrow \mathbb{C}_\alpha,$
where $\mathbb{C}_\alpha :=U_\alpha \times \mathbb{C}$
and also their inverse $\xi^{-1}_\alpha$ together with
isomorphisms
$i_\alpha \colon \xi^{-1}_\alpha {\mathop{\otimes}\limits_{\mathbb{C}_\alpha}}\xi_\alpha
\rightarrow \id_{A_\alpha}.$
Put $L_{\alpha \beta}:=\xi_\beta {\mathop{\otimes}\limits_{A_{\beta}}}{_\beta M_\alpha}
{\mathop{\otimes}\limits_{A_{\alpha}}}\xi^{-1}_\alpha.$
Then $\theta'_{\alpha \beta \gamma}$ is the only isomorphism
which makes the diagram
$$
\diagram
L_{\alpha \beta}\otimes L_{\beta \gamma}=
\xi_\gamma {\mathop{\otimes}\limits_{A_{\gamma}}}{_\gamma M_\beta}
{\mathop{\otimes}\limits_{A_{\beta}}}\xi^{-1}_\beta
{\mathop{\otimes}\limits_{\mathbb{C}_{\beta}}}\xi_\beta
{\mathop{\otimes}\limits_{A_{\beta}}}
{_\beta M_\alpha} {\mathop{\otimes}\limits_{A_{\alpha}}}\xi^{-1}_\alpha
\rto^{\qquad \qquad 1\otimes i_\beta \otimes 1}
\drto_{\theta'_{\alpha \beta \gamma}} &
\xi_\gamma {\mathop{\otimes}\limits_{A_{\gamma}}}{_\gamma M_\beta}
{\mathop{\otimes}\limits_{A_{\beta}}}
{_\beta M_\alpha} {\mathop{\otimes}\limits_{A_{\alpha}}}\xi^{-1}_\alpha
\dto^{1\otimes \theta_{\alpha \beta \gamma}\otimes 1}\\
& L_{\alpha \gamma}=
\xi_\gamma {\mathop{\otimes}\limits_{A_{\gamma}}}{_\gamma M_\alpha}
{\mathop{\otimes}\limits_{A_{\alpha}}}\xi^{-1}_\alpha \\
\enddiagram
$$
commutative. Now the identity
$$
\theta'_{\alpha \gamma \delta}\circ (1\otimes \theta'_{\alpha \beta \gamma})=
\theta'_{\alpha \beta \delta}\circ (\theta'_{\beta \gamma \delta}\otimes 1)
$$ follows from the counterpart for $\theta$'s.

Note that the collection $\{ \xi_\alpha \}$ of bimodules with
obvious isomorphisms $\varphi_{\alpha \beta}\colon L_{\alpha \beta}
{\mathop{\otimes}\limits_{\mathbb{C}_{\alpha}}}\xi_\alpha \rightarrow \xi_\beta
{\mathop{\otimes}\limits_{A_{\beta}}}{_\beta M_\alpha}$
define a morphism $(A,\, M,\, \theta ,\, \mathcal{U})\rightarrow
(L,\, \theta',\, \mathcal{U}).\quad \qed$

\bigskip

Note that a global matrix algebra (``Azumaya'') bundle $A\rightarrow X$ can be considered as a Morita bundle gerbe $(A,\, M,\, \vartheta,\, {\mathcal U})$
with respect to any open cover ${\mathcal U}$ where $A_\alpha =A|_{U_\alpha},\; _\beta M_\alpha =A|_{U_{\alpha \beta}},$ etc.
The assignment to a matrix algebra bundle $A$ the equivalence class of the corresponding MBG corresponds to the map
${\rm BPU}(k)\rightarrow {\rm K}(\mathbb{Z},\, 3),\; A\mapsto DD(A).$ In order to define a lift of $X\rightarrow {\rm K}(\mathbb{Z},\, 3)$
we need the concept of a bundle gerbe module (see subsection \ref{maeqbg}).

Note that the concept of a Morita bundle gerbe allows to treat a global matrix algebra
bundle over $X$ and the corresponding bundle gerbe with the same
Dixmier-Douady class (of finite order in $H^3(X,\, \mathbb{Z})$) as equivalent cocycles
(cf. subsection \ref{maeqbg}).

The following Proposition is obvious.

\begin{proposition}
An MBG $(A,\, M,\, \theta,\, \mathcal{U})$ is equivalent to a global
matrix algebra bundle over $X$ (as an MBG) iff
$DD(A,\, M,\, \theta,\, \mathcal{U})\in H^3_{tors}(X,\, \mathbb{Z}).$
\end{proposition}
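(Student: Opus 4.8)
\emph{Sketch of a proof.} The plan is to reduce the statement, via the classification of (Morita) bundle gerbes by their Dixmier--Douady class, to the classical fact --- essentially a theorem of Serre --- describing exactly which classes of $H^3(X,\mathbb{Z})$ are realized by finite-dimensional matrix algebra bundles. First I would invoke the previous Proposition, so that the inclusion $\mathcal{BG}(X)\hookrightarrow\mathcal{MBG}(X)$ is an equivalence, together with the classification of bundle gerbes (hence of MBG's) up to equivalence by their Dixmier--Douady class from subsection \ref{DDclasssubsec}. Then two MBG's over $X$ are equivalent if and only if their Dixmier--Douady classes coincide, so the assertion becomes: a class $\xi\in H^3(X,\mathbb{Z})$ is of the form $DD(A,M,\theta,\mathcal{U})$ for a globally defined matrix algebra bundle $A\to X$ if and only if $\xi$ is a torsion class. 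Here I would also note that, when $A$ is regarded as an MBG in the way described above, $DD(A,M,\theta,\mathcal{U})$ agrees with the ordinary Dixmier--Douady class of the $\mathrm{PU}(k)$-bundle $A$, i.e.\ with the image of $[A]$ under the map $\mathrm{BPU}(k)\to\mathrm{K}(\mathbb{Z},3)$ mentioned before the Proposition; this can be read off directly from the formula $L_{\alpha\beta}=\xi_\beta\otimes {}_\beta M_\alpha\otimes\xi_\alpha^{-1}$ in the proof of that Proposition.

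For the ``only if'' direction I would use the central extension $1\to\mathbb{Z}/k\mathbb{Z}\to\mathrm{SU}(k)\to\mathrm{PU}(k)\to1$: it exhibits $DD(A)$ as the image, under the Bockstein $\beta\colon H^2(X,\mathbb{Z}/k\mathbb{Z})\to H^3(X,\mathbb{Z})$ of the coefficient sequence $0\to\mathbb{Z}\xrightarrow{\,k\,}\mathbb{Z}\to\mathbb{Z}/k\mathbb{Z}\to0$, of the obstruction in $H^2(X,\mathbb{Z}/k\mathbb{Z})$ to lifting $A$ to an $\mathrm{SU}(k)$-bundle. Since in the associated long exact sequence the image of $\beta$ is the kernel of multiplication by $k$ on $H^3(X,\mathbb{Z})$, the class $DD(A)$ is $k$-torsion.

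For the ``if'' direction, given $\xi$ with $k\xi=0$ the same exact sequence gives $\xi=\beta(\eta)$ for some $\eta\in H^2(X,\mathbb{Z}/k\mathbb{Z})$, and I would realize $\eta$ by a $\mathrm{PU}(k)$-bundle using the fibration $\mathrm{BSU}(k)\to\mathrm{BPU}(k)\to\mathrm{K}(\mathbb{Z}/k\mathbb{Z},2)$: its fibre $\mathrm{BSU}(k)$ is $3$-connected, so obstruction theory shows that on a finite complex the classifying map $\eta\colon X\to\mathrm{K}(\mathbb{Z}/k\mathbb{Z},2)$ lifts to $\mathrm{BPU}(k)$, and for a general compact Hausdorff $X$ one reduces to this case by representing $\xi$ through the nerve of a suitable finite cover. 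The resulting bundle $A$ then has $DD(A)=\beta(\eta)=\xi$, hence, viewed as an MBG, it is equivalent to the given one by the first paragraph.

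The hard part will be this ``if'' direction, namely realizing an arbitrary torsion class by a \emph{finite-dimensional} matrix algebra bundle; this is exactly the content of Serre's theorem that the torsion subgroup of $H^3(X,\mathbb{Z})$ is the subgroup of Brauer classes of finite matrix algebra bundles over $X$, and one either reproves it by the obstruction/nerve argument sketched above --- keeping track of the connectivity of $\mathrm{BSU}(k)$ and of the passage from finite complexes to general compact spaces --- or simply cites it. The ``only if'' direction, by contrast, is routine once the central extension and Bockstein have been set up.
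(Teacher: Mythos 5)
Your overall route is the one the paper has in mind: the paper itself gives no argument (it declares the proposition obvious), relying implicitly on the preceding facts that MBG's, like bundle gerbes, are classified up to equivalence by their Dixmier--Douady class, that a global matrix algebra bundle is an MBG whose class is the ordinary Dixmier--Douady class, and on the earlier remark that every finite-order class in $H^3(X,\mathbb{Z})$ lies in the image of $\delta_k$ for some $k$ (Serre's theorem). Your reduction, your Bockstein argument for the ``only if'' direction, and your check that the MBG class of a global bundle agrees with the ordinary one (read off from $L_{\alpha\beta}=\xi_\beta\otimes{}_\beta M_\alpha\otimes\xi_\alpha^{-1}$) are all correct and fill in exactly what the paper leaves unsaid; citing Serre's theorem for the ``if'' direction is perfectly in line with the paper.

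The one genuine flaw is the obstruction-theoretic ``reproof'' you sketch for the ``if'' direction. Three-connectedness of the fibre $\mathrm{BSU}(k)$ of $\mathrm{BPU}(k)\rightarrow\mathrm{K}(\mathbb{Z}/k\mathbb{Z},2)$ only kills the obstructions in $H^{j+1}(X,\pi_j(\mathrm{BSU}(k)))$ for $j\le 3$; since $\pi_4(\mathrm{BSU}(k))\cong\mathbb{Z}$ (and higher homotopy is nonzero as well), a finite complex of dimension $\ge 5$ carries potential obstructions in $H^5(X,\mathbb{Z})$ and beyond, so the claim that ``on a finite complex the map $X\rightarrow\mathrm{K}(\mathbb{Z}/k\mathbb{Z},2)$ lifts to $\mathrm{BPU}(k)$'' is false as stated. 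Indeed, for a fixed $k$ not every $k$-torsion class is realized by a $\mathrm{PU}(k)$-bundle: the rank of the realizing bundle must be allowed to grow with $X$ (one passes to $\mathrm{PU}(k^m)$ or to other multiples of $k$ to kill the higher obstructions), and this is precisely what makes Serre's theorem nontrivial rather than a two-line lifting argument. So either quote Serre's theorem outright (as you allow) or replace the sketch by the genuine stabilization argument; as written, the ``reprove it by obstruction theory'' alternative would not go through.
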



\subsection{Classifying space for bundle gerbes}

Let $\mathcal{H}$
be a separable Hilbert space, $\PU(\mathcal{H})={\rm U}(\mathcal{H})/{\rm U}(1)$ the corresponding
projective unitary group (considered as a topological group with the norm topology). Let
\begin{equation}
\label{lbbn}
\vartheta_{1}={\rm
U}(\mathcal{H}){\mathop{\times}\limits_{{\rm U}(1)}}\mathbb{C}\rightarrow
{\rm PU}(\mathcal{H})
\end{equation}
be the canonical line bundle over ${\rm PU}(\mathcal{H})$ (also universal as ${\rm U}(\mathcal{H})$
is contractible and hence ${\rm PU}(\mathcal{H})$ is a model of $\BU(1)$),
associated with
the principal ${\rm U}(1)$-bundle
\begin{equation}
\label{prinbund}
{\rm U}(1)\rightarrow {\rm U}(\mathcal{H})
\stackrel{\pi}{\rightarrow} {\rm PU}(\mathcal{H}).
\end{equation}

The following construction assigns a bundle gerbe to any projective cocycle.
Let $(g,\, \mathcal{U})$ be a $\PU(\mathcal{H})$-valued
1-cocycle $\{ g_{\alpha \beta}\},$ $g_{\alpha \beta}\colon U_{\alpha \beta}\rightarrow \PU(\mathcal{H}).$
The projective cocycle $(g,\, {\mathcal U})$ gives rise to a
bundle gerbe $(L(g),\, \theta,\, \mathcal{U})$, where the line bundles $L_{\alpha
\beta}:=g_{\alpha \beta}^*\vartheta_{1}\rightarrow U_{\alpha
\beta}$ are defined as pullbacks of the canonical line bundle
$\vartheta_{1},$ and where the product
$$
\theta_{\alpha \beta \gamma} \colon L_{\alpha \beta}\otimes
L_{\beta \gamma}\stackrel{\cong}{\rightarrow}L_{\alpha \gamma}
$$
over three-fold overlaps $U_{\alpha \beta \gamma}$ is defined by
the group multiplication
$$
\widehat{\mu}_1\colon {\rm U}(\mathcal{H})\times {\rm U}(\mathcal{H})\rightarrow {\rm
U}(\mathcal{H})
$$
(cf.~(\ref{prinbund})). Here we use the isomorphism
\begin{equation}
\label{grmult}
\mu_1^*(\vartheta_{1})\cong \vartheta_{1}
\boxtimes \vartheta_{1},
\end{equation}
where
$$
\mu_1 \colon {\rm PU}(\mathcal{H})\times {\rm PU}(\mathcal{H})\rightarrow {\rm PU}(\mathcal{H})
$$
is the group multiplication and $\boxtimes$ denotes the exterior tensor
product. Then the commutative diagram
$$
\diagram
U_{\alpha \beta \gamma}\drto_{\subset} \rto^{\diag \quad} & U_{\alpha \beta}\times U_{\beta \gamma}
\rto^{g_{\alpha \beta}\times g_{\beta \gamma}\qquad}
& {\rm PU}(\mathcal{H})\times {\rm PU}(\mathcal{H})\rto^{\qquad \mu_1} & {\rm PU}(\mathcal{H}) \\
& U_{\alpha \gamma}\urrto_{g_{\alpha \gamma}} && \\
\enddiagram
$$
gives us isomorphisms $\theta_{\alpha \beta \gamma}$
between $(g_{\alpha \beta}g_{\beta \gamma})^*(\vartheta_1)=L_{\alpha \beta}\otimes L_{\beta \gamma}$
and $g^*_{\alpha \gamma}(\vartheta_1)=L_{\alpha \gamma}$ over $U_{\alpha \beta \gamma}.$

Clearly, the product $\theta =\{ \theta_{\alpha \beta
\gamma}\}$ is associative over four-fold overlaps, i.e.~the diagrams (\ref{assoccond})
commute over $U_{\alpha \beta \gamma \delta}$.

Moreover, equivalent cocycles give rise to equivalent bundle gerbes.
So we have the natural transformation of homotopy functors
$\Phi \colon H^1(X,\, \PU({\mathcal{H}}))\rightarrow BG(X),$
where $X\mapsto BG(X)$ denotes the functor which assigns to $X$
the group of equivalence classes of bundle gerbes over $X$.

\begin{theorem}
\label{suspth}
$\Phi$ is a natural isomorphism. In other words, any bundle gerbe over $X$ is
equivalent to a bundle gerbe of the form $(L(g),\, \theta(g),\, \mathcal{U}).$
\end{theorem}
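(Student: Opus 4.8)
The plan is to show that $\Phi$ is both surjective and injective as a natural transformation of functors, and to reduce both statements to the classification of bundle gerbes by their Dixmier-Douady class established in subsection \ref{DDclasssubsec}. The key technical input is the observation that a bundle gerbe $(L,\, \theta,\, \mathcal{U})$ over a good cover is, after choosing unit-modulus sections $\sigma_{\alpha\beta}$ of the $L_{\alpha\beta}$, completely encoded by a $\check{\rm C}$ech $2$-cocycle $\{\lambda_{\alpha\beta\gamma}\}$ with values in $\underline{\U(1)}$, while a $\PU(\mathcal{H})$-valued $1$-cocycle $\{g_{\alpha\beta}\}$ is encoded by lifting each $g_{\alpha\beta}$ to $\widehat{g}_{\alpha\beta}\colon U_{\alpha\beta}\to \U(\mathcal{H})$ (possible since $U_{\alpha\beta}$ is contractible, as $\U(\mathcal{H})\to\PU(\mathcal{H})$ is a principal $\U(1)$-bundle with contractible total space) and recording the discrepancy $\widehat{g}_{\alpha\beta}\widehat{g}_{\beta\gamma}=\lambda_{\alpha\beta\gamma}\widehat{g}_{\alpha\gamma}$, $\lambda_{\alpha\beta\gamma}\colon U_{\alpha\beta\gamma}\to\U(1)$. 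Under the construction $(g,\mathcal{U})\mapsto (L(g),\theta(g),\mathcal{U})$, with $L_{\alpha\beta}=g_{\alpha\beta}^*\vartheta_1$ and the section $\sigma_{\alpha\beta}$ induced by the lift $\widehat g_{\alpha\beta}$, the product $\theta_{\alpha\beta\gamma}$ built from the group multiplication sends $\sigma_{\alpha\beta}\otimes\sigma_{\beta\gamma}$ to exactly $\lambda_{\alpha\beta\gamma}\sigma_{\alpha\gamma}$; so $DD(L(g),\theta(g),\mathcal{U})=\delta([\lambda])$, which is precisely the image of $[g]\in H^1(X,\PU(\mathcal{H}))$ under the classical isomorphism $H^1(X,\PU(\mathcal{H}))\xrightarrow{\cong} H^3(X,\mathbb{Z})$ (the connecting map of $\U(1)\to\U(\mathcal{H})\to\PU(\mathcal{H})$ composed with $H^2(X,\underline{\U(1)})\xrightarrow{\delta}H^3(X,\mathbb{Z})$).

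For surjectivity, let $(L,\theta,\mathcal{U})$ be an arbitrary bundle gerbe; passing to a good refinement does not change its equivalence class, so assume $\mathcal{U}$ is good. Form the $\check{\rm C}$ech cocycle $\lambda=\{\lambda_{\alpha\beta\gamma}\}$ as above and let $c=DD(L,\theta,\mathcal{U})=\delta([\lambda])\in H^3(X,\mathbb{Z})$. Since $\U(\mathcal{H})$ is contractible, $\PU(\mathcal{H})$ is a $K(\mathbb{Z},2)$-space and $H^1(X,\PU(\mathcal{H}))\cong H^3(X,\mathbb{Z})$ is surjective, so there is a $\PU(\mathcal{H})$-valued $1$-cocycle $(g,\mathcal{U}')$ with $\delta$ applied to its associated $\U(1)$-valued $2$-cocycle equal to $c$; refining $\mathcal{U}$ and $\mathcal{U}'$ to a common good cover we may assume they share the cover $\mathcal{U}$. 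Then $DD(L(g),\theta(g),\mathcal{U})=c=DD(L,\theta,\mathcal{U})$, and by the last sentence of subsection \ref{DDclasssubsec} — bundle gerbes are equivalent iff they have the same Dixmier-Douady class — $(L,\theta,\mathcal{U})$ is equivalent to $(L(g),\theta(g),\mathcal{U})$. This proves $\Phi$ is surjective. For injectivity, suppose $(L(g),\theta(g),\mathcal{U})$ and $(L(g'),\theta(g'),\mathcal{U})$ are equivalent as bundle gerbes; then they have equal Dixmier-Douady classes, hence $\delta([\lambda])=\delta([\lambda'])$ in $H^3(X,\mathbb{Z})$, and since $\delta\colon H^2(X,\underline{\U(1)})\to H^3(X,\mathbb{Z})$ is an isomorphism (because $\underline{\mathbb{R}}$ is fine) we get $[\lambda]=[\lambda']$ in $H^2(X,\underline{\U(1)})$, which is exactly the statement that $(g,\mathcal{U})$ and $(g',\mathcal{U})$ are equivalent $\PU(\mathcal{H})$-cocycles, i.e.\ define the same class in $H^1(X,\PU(\mathcal{H}))$.

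Finally, naturality of $\Phi$ with respect to maps $f\colon X\to Y$ is immediate: pulling back a $\PU(\mathcal{H})$-cocycle and then forming the associated gerbe agrees, up to canonical equivalence, with forming the gerbe first and then pulling it back along $f$, since both operations are defined by pullback of the fixed bundle $\vartheta_1$ along the composite maps to $\PU(\mathcal{H})$. Combining surjectivity, injectivity and naturality, $\Phi$ is a natural isomorphism. The main obstacle in writing this out carefully is the bookkeeping of common refinements — one must check that restricting a gerbe and restricting a cocycle to a common good cover is compatible with the construction $(g,\mathcal{U})\mapsto(L(g),\theta(g),\mathcal{U})$ and does not disturb the identification $DD(L(g),\theta(g),\mathcal{U})=\delta([\lambda])$ — together with verifying the compatibility of the section-induced trivializations $\sigma_{\alpha\beta}$ with the isomorphism $\mu_1^*(\vartheta_1)\cong\vartheta_1\boxtimes\vartheta_1$, so that $\theta(g)$ really does produce the cocycle $\lambda$ on the nose rather than up to an a priori unknown coboundary; once this comparison is nailed down, everything else is a formal consequence of the already-established facts that $\delta$ is an isomorphism and that $DD$ is a complete invariant.
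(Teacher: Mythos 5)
Your proof is correct, but it is not the proof the paper gives --- in fact it is precisely the ``standard proof'' that the Remark preceding Theorem \ref{suspth} mentions and deliberately sets aside. You reduce everything to cohomology: lift the $\PU(\mathcal{H})$-cocycle over a good cover, identify the resulting $\U(1)$-valued $2$-cocycle $\lambda$ with the \v{C}ech cocycle computing $DD(L(g))$, and then invoke the Dixmier--Douady isomorphism $H^1(X,\,\underline{\PU(\mathcal{H})})\cong H^2(X,\,\underline{\U(1)})\cong H^3(X,\,\mathbb{Z})$ together with the completeness of the Dixmier--Douady invariant (``equivalent iff equal $DD$ class'') stated in subsection \ref{DDclasssubsec}. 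That is legitimate, since the paper takes the $DD$ classification as known, and your care about the compatibility of the section $\sigma_{\alpha\beta}$ induced by the lift $\widehat{g}_{\alpha\beta}$ with the isomorphism $\mu_1^*(\vartheta_1)\cong \vartheta_1\boxtimes\vartheta_1$ is exactly the right point to pin down; the only step you pass over quickly is the injectivity of the connecting map $H^1(X,\,\underline{\PU(\mathcal{H})})\rightarrow H^2(X,\,\underline{\U(1)})$, which needs contractibility of $\U(\mathcal{H})$ (Kuiper) and the nonabelian exact-sequence argument, but that too is standard. The paper instead argues homotopy-theoretically: Brown representability represents $X\mapsto BG(X)$ by a CW-complex, Yoneda turns $\Phi$ into a map $\phi\colon \BPU(\mathcal{H})\rightarrow T$, and the Whitehead theorem reduces the claim to spheres, where a hemisphere decomposition and the fact that two trivializations differ by a line bundle settle each homotopy group. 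What your route buys is an explicit, short cocycle-level identification requiring no representability machinery; what the paper's route buys is independence from the $DD$ classification, the suspension isomorphism $BG(\Sigma X)\cong Pic(X)$ of (\ref{isomsusppic}) as a byproduct, and an argument that transfers to the higher setting (it is reused for Morita 2-bundle gerbes in subsection \ref{trivabgsub}), which is why the author prefers it.
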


\begin{remark}
An alternative explanation of this isomorphism: exact sequence of groups
$$
1\rightarrow \U(1)\rightarrow {\rm U}({\mathcal H})\rightarrow {\rm PU}({\mathcal H})\rightarrow 1
$$
gives rise to the isomorphism $H^1(X,\, \underline{{\rm PU}({\mathcal H})})\cong H^2(X,\, \underline{{\rm U}(1)})$
and the last group is isomorphic to $H^3(X,\, \mathbb{Z})$
which is isomorphic to the group $BG(X)$, as we have seen in subsection \ref{DDclasssubsec}.
  So the standard proof of this result uses the Dixmier-Douady class which
  classifies equivalence classes of bundle gerbes. But we give a
  sketch of an independent proof which is more appropriate for generalizations
  we have in mind.
\end{remark}
\begin{proof}
  First note that $X\mapsto BG(X)$ is a homotopy functor which satisfies the
  condition of the Brown representability theorem. Therefore it is represented
  by some CW-complex $T$ which is unique up to homotopy equivalence.  Next,
  according to the Yoneda lemma, the natural transformation $\Phi$ defines a
  map $\phi \colon \BPU({\mathcal{H}})\rightarrow T.$ As $\BPU(\mathcal{H})$
  has the homotopy type of a CW-complex, by the Whitehead theorem it is
  sufficient to show that $\phi$ induces isomorphisms on homotopy groups,
  i.e. $\Phi$ induces isomorphisms for spheres.

  So consider a bundle gerbe $(L,\, \theta,\, \mathcal{U})$ over $X=S^n$. By
  $X_0$ or $X_1$ denote the (thickened) upper or lower open hemisphere,
  respectively, and let $\mathcal{V}:=\mathcal{U}\cap X_0=\{ U_\alpha \cap
  X_0\}_\alpha$ and $\mathcal{W}:=\mathcal{U}\cap X_1$ be the corresponding
  cover of $X_0$ or $X_1$.  Restricting, which is a particular case of the
  pullback $(L,\, \theta,\, \mathcal{U})$ to $\mathcal{V}$ and $\mathcal{W}$
  we obtain bundle gerbes $(L_0,\, \theta_0,\, \mathcal{V})$ and $(L_1,\,
  \theta_1,\, \mathcal{W})$ over $X_0$ or $X_1$. Because of contractibility of
  $X_0$ and $X_1$ there are left and right trivializations $(\eta,\, \varphi,\, \mathcal{V})$
  and $(\kappa,\, \psi,\, \mathcal{W})$ of $(L_0,\, \theta_0,\, \mathcal{V})$ or
  $(L_1,\, \theta_1,\, \mathcal{W})$ and these are unique up to the tensor
  product with a trivial line bundle.

  Put $X_{01}:=X_0\cap X_1\simeq S^{n-1}.$ We see that the restriction of
  $(L,\, \theta,\, \mathcal{U})$ to $X_{01}$ has two trivializations (namely
  the restrictions of $(\eta,\, \varphi,\, \mathcal{V})$ and of $(\kappa,\, \psi,\,
  \mathcal{W})$) and their ``difference'' $\{ \eta_\alpha \otimes \kappa_\alpha \}/\sim$ is a global line bundle $\zeta \rightarrow
  X_{10}$. If this bundle is trivial, it is easy to see that the
  trivializations $(\eta,\, \varphi,\, \mathcal{V})$ and $(\kappa,\, \psi,\,
  \mathcal{W})$ can be using to define a global trivialization of
  $(L,\, \theta,\, \mathcal{U})$.  Therefore if $n\neq 3,$ the bundle gerbe
  $(L,\, \theta,\, \mathcal{U})$ over $S^n$ is stably trivial.

  On the other hand, for $n=3$ the isomorphism class of $\zeta$ is the only
  invariant of the equivalence class of $(L,\, \theta,\, \mathcal{U})$, i.e.
  the equivalence class of a bundle gerbe $(L,\, \theta,\, \mathcal{U})$ over
  $S^3$ is determined by the isomorphism class of a line bundle over $S^{2}$
  and hence by a $\PU({\mathcal H})$-cocycle $g_{X_{01}}\colon
  S^{n-1}\rightarrow \PU({\mathcal H})$.
\end{proof}

Note that the previous proof implies that there is an isomorphism
\begin{equation}
\label{isomsusppic}
BG(\Sigma X)\cong Pic(X)
\end{equation}
natural on $X$.

\begin{corollary}
There is the natural isomorphism of functors
$$\Phi^\prime \colon BG(\ldots)\cong [\ldots,\, \BPU({\mathcal{H}})].$$
\end{corollary}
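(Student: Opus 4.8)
The plan is to deduce this directly from Theorem~\ref{suspth}, together with the classical identification of the nonabelian cohomology set $H^1(X,\PU(\mathcal H))$ with homotopy classes of maps into a classifying space, and then compose the resulting natural isomorphisms.

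First I would recall that a $\PU(\mathcal H)$-valued $1$-cocycle $(g,\mathcal U)$ is exactly the clutching (gluing) data of a principal $\PU(\mathcal H)$-bundle over $X$, that equivalence of cocycles corresponds to isomorphism of principal bundles, and that pullback of cocycles corresponds to pullback of bundles. Since $X$ is compact Hausdorff, hence paracompact, every principal $\PU(\mathcal H)$-bundle over it is numerable, and numerable principal $\PU(\mathcal H)$-bundles over paracompact spaces are classified by homotopy classes of maps to $\BPU(\mathcal H)$. This furnishes a natural isomorphism of homotopy functors $\beta\colon H^1(\cdot,\PU(\mathcal H))\xrightarrow{\ \cong\ }[\cdot,\BPU(\mathcal H)]$.

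Theorem~\ref{suspth} provides a natural isomorphism $\Phi\colon H^1(\cdot,\PU(\mathcal H))\xrightarrow{\ \cong\ }BG(\cdot)$. Hence $\Phi':=\beta\circ\Phi^{-1}\colon BG(\cdot)\xrightarrow{\ \cong\ }[\cdot,\BPU(\mathcal H)]$ is the asserted natural isomorphism. One may also phrase this in the spirit of the proof of Theorem~\ref{suspth}: $BG(\cdot)$ is represented by a CW-complex $T$; by the Yoneda lemma $\Phi$ corresponds to a map $\phi\colon \BPU(\mathcal H)\to T$; Theorem~\ref{suspth} says precisely that $\phi$ induces isomorphisms on $[S^n,-]$ for all $n$, i.e. on homotopy groups, so $\phi$ is a homotopy equivalence by the Whitehead theorem, and therefore $BG(X)=[X,T]\cong[X,\BPU(\mathcal H)]$ naturally in $X$.

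The only point requiring a little care is the foundational statement underlying $\beta$: one must ensure that $\PU(\mathcal H)$ (with the norm topology) is well behaved enough for the classification of principal bundles to apply over the base spaces in question. This is the case: $\PU(\mathcal H)$ is a metrizable (Banach--Lie) group, so its bundles over paracompact bases are numerable and classified by $\BPU(\mathcal H)$; moreover, by Kuiper's theorem $\U(\mathcal H)$ is contractible, whence $\PU(\mathcal H)$ is a $\K(\mathbb Z,2)$ and $\BPU(\mathcal H)\simeq\K(\mathbb Z,3)$ has the homotopy type of a CW-complex. Granting this, the corollary is a purely formal composition of natural isomorphisms already established.
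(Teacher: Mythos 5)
Your argument is correct and matches the paper's (implicit) reasoning: the corollary is obtained exactly by combining Theorem~\ref{suspth} with the standard identification $H^1(X,\underline{\PU(\mathcal H)})\cong[X,\BPU(\mathcal H)]$, or equivalently by observing that the proof of the theorem already exhibits the map $\phi\colon\BPU(\mathcal H)\to T$ as a homotopy equivalence via Whitehead. Your remarks on numerability and the CW homotopy type of $\BPU(\mathcal H)$ are appropriate care, but the route is essentially the same as the paper's.
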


\begin{corollary}
There is a universal bundle gerbe over $\BPU({\mathcal H})$ such that every bundle gerbe is equivalent to
its pullback via some map (unique up to homotopy).
\end{corollary}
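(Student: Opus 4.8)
The plan is to obtain the universal bundle gerbe as the \emph{universal element} of the representable functor $BG(-)$, using the natural isomorphism $\Phi'\colon BG(-)\cong [-,\,\BPU(\mathcal H)]$ from the preceding corollary. First I would apply $\Phi'$ to the space $X=\BPU(\mathcal H)$ itself, obtaining a bijection
$$
\Phi'_{\BPU(\mathcal H)}\colon BG(\BPU(\mathcal H))\xrightarrow{\ \cong\ }[\BPU(\mathcal H),\,\BPU(\mathcal H)],
$$
and I define the universal bundle gerbe $\mathcal G_{\mathrm{univ}}$ over $\BPU(\mathcal H)$ to be (the equivalence class of) a bundle gerbe with $\Phi'_{\BPU(\mathcal H)}(\mathcal G_{\mathrm{univ}})=[\id]$. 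Concretely, $\mathcal G_{\mathrm{univ}}$ can be taken to be $(L(g),\,\theta(g),\,\mathcal U)$ for any $\PU(\mathcal H)$-valued cocycle $(g,\,\mathcal U)$ classifying the universal principal $\PU(\mathcal H)$-bundle $\EPU(\mathcal H)\to \BPU(\mathcal H)$ (such a cocycle exists with respect to a suitable numerable cover of $\BPU(\mathcal H)$, e.g.\ the one arising from Milnor's construction), with $\theta(g)$ induced by the group multiplication exactly as in the construction preceding Theorem \ref{suspth}.

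Next I would invoke naturality of $\Phi'$. Given an arbitrary space $X$ and a bundle gerbe $\mathcal G$ over $X$, let $f\colon X\to \BPU(\mathcal H)$ be a map with $\Phi'_X(\mathcal G)=[f]$; such an $f$ exists and its homotopy class is uniquely determined because $\Phi'_X$ is a bijection. Naturality of $\Phi'$ with respect to $f$ gives a commutative square relating $f^*$ on bundle gerbes to $f^*$ on homotopy classes of maps. Chasing $\mathcal G_{\mathrm{univ}}$ around this square yields $\Phi'_X(f^*\mathcal G_{\mathrm{univ}})=f^*[\id]=[f]=\Phi'_X(\mathcal G)$, and since $\Phi'_X$ is injective this forces $f^*\mathcal G_{\mathrm{univ}}$ to be equivalent to $\mathcal G$. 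Together with the uniqueness of $[f]$ noted above, this is precisely the assertion of the corollary.

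The argument is essentially formal once Theorem \ref{suspth} (equivalently the isomorphism $\Phi'$) is in hand, so I do not expect a serious obstacle; the two points that require a little care are (a) the dependence of a bundle gerbe on the chosen open cover, which is harmless because $BG(-)$ is defined on equivalence classes and pullbacks are only taken up to equivalence, and (b) the genuine functoriality (and monoidality) of the pullback $f^*\colon \mathcal{BG}(\BPU(\mathcal H))\to\mathcal{BG}(X)$, which was recorded in the subsection on functoriality and is what makes the naturality square available. One may also remark that, since $\U(\mathcal H)$ is contractible (Kuiper's theorem), $\BPU(\mathcal H)$ is a model of $\K(\mathbb Z,\,3)$, so $\mathcal G_{\mathrm{univ}}$ is the bundle gerbe carrying the universal Dixmier--Douady class.
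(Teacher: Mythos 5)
Your argument is exactly the standard unwinding of the paper's one-line proof: the paper invokes Brown representability (via the natural isomorphism $\Phi'$ of the preceding corollary), and you spell out the universal-element/Yoneda step — take the gerbe corresponding to $[\id]\in[\BPU(\mathcal H),\,\BPU(\mathcal H)]$ and use naturality and injectivity of $\Phi'_X$ — which is precisely what that citation means. The explicit identification of the universal gerbe with $(L(g),\,\theta(g),\,\mathcal U)$ for a cocycle of $\EPU(\mathcal H)\to\BPU(\mathcal H)$ is a correct and welcome concretization, but the core of your proof coincides with the paper's.
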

\begin{proof}
  This follows from the Brown representability theorem.
\end{proof}

\smallskip

\begin{definition}\label{def:of_Brauergrp}
  Note that the tensor product of bundle gerbes induces a group operation on
  $BG(X)$ and the above isomorphism $\Phi^\prime$ is an isomorphism of
functors with values in abelian groups. Recall that $\BPU({\mathcal{H}})\cong
  \K(\mathbb{Z},\, 3)$ as $H$-spaces, therefore $BG(X)\cong H^3(X,\,
  \mathbb{Z})$.  The group $BG(X)$ is called the {\it Brauer group} $Br(X).$
\end{definition}
\smallskip

This isomorphism coincides with the one given by the Dixmier-Douady class
\cite{BCMMS}.

\subsection{Finite order case}

If we consider $\PU(k)$-cocycles $(g,\, \mathcal{U})$ in place of $\PU(\mathcal{H})$-cocycles,
we obtain a particular (``finite order'') case of bundle gerbes.
More precisely, fix a positive integer $k>1$ and consider the projective unitary
group ${\rm PU}(k):={\rm U}(k)/{\rm U}(1)$, i.e. the quotient
of ${\rm U}(k)$ by its center. Let
\begin{equation}
\label{lbbnf}
\vartheta_{k,\, 1}={\rm
U}(k){\mathop{\times}\limits_{{\rm U}(1)}}\mathbb{C}\rightarrow
{\rm PU}(k)
\end{equation}
be the canonical line bundle over ${\rm PU}(k)$ associated with
the principal ${\rm U}(1)$-bundle
\begin{equation}
\label{prinbundf}
{\rm U}(1)\rightarrow {\rm U}(k)
\stackrel{\pi}{\rightarrow} {\rm PU}(k).
\end{equation}

Choose a
projective cocycle $(g,\, {\mathcal U}):=\{ g_{\alpha \beta}\},$
$g_{\alpha \beta}\colon U_{\alpha \beta} \rightarrow {\rm PU}(k).$

The projective cocycle $(g,\, {\mathcal U})$ gives rise to a
bundle gerbe $(L(g),\, \theta,\, \mathcal{U})$, where the line bundles
$L_{\alpha
\beta}:=g_{\alpha \beta}^*\vartheta_{k,\, 1}\rightarrow U_{\alpha
\beta}$ are defined as pullbacks of the canonical line bundle
$\vartheta_{k,\, 1},$ and the product
$$
\theta_{\alpha \beta \gamma} \colon L_{\alpha \beta}\otimes
L_{\beta \gamma}\stackrel{\cong}{\rightarrow}L_{\alpha \gamma}
$$
over three-fold overlaps $U_{\alpha \beta \gamma}$ is defined by
the group multiplication
$$
\widehat{\mu}_{k,\, 1}\colon {\rm U}(k)\times {\rm U}(k)\rightarrow {\rm
U}(k)
$$
(cf.~(\ref{prinbund})). In particular,
\begin{equation}
\label{grmultf}
\mu_{k,\, 1}^*(\vartheta_{k,\, 1})\cong \vartheta_{k,\, 1}
\boxtimes \vartheta_{k,\, 1},
\end{equation}
where
$$
\mu_{k,\, 1} \colon {\rm PU}(k)\times {\rm PU}(k)\rightarrow {\rm PU}(k)
$$
is the group multiplication and $\boxtimes$ denotes the exterior tensor
product.

We also have the group homomorphism
$$
\varphi \colon \PU(k)\rightarrow \PU_k(\mathcal{H})\cong \PU(\mathcal{H}),\quad
a\mapsto a\otimes \id_{\mathcal{B}({\mathcal{H}})}
$$
which is the classifying map for $\vartheta_{k,\, 1},\; \varphi^*(\vartheta_1)\cong \vartheta_{k,\, 1}.$
Therefore we can consider the above equivalence relation on
finite order bundle gerbes. Then their equivalence classes
correspond to the image of the map $[X,\, \BPU(k)]\rightarrow [X,\, \BPU({\mathcal{H}})].$

\begin{remark}
Note that a $\PU(k)$-valued cocycle (and its $\PU(k)$-equivalence class)
contains some additional information compared to the $\PU(\mathcal{H})$-cocycle.
More precisely, a ${\rm PU}(k)$-cocycle $\{ g_{\alpha \beta}\}$ defines a
principal ${\rm PU}(k)$-bundle over $X$, and in this way one
obtains a one-to-one correspondence between the set $H^1(X,\,
\underline{{\rm PU}(k)})$ and the set of isomorphism classes of
principal ${\rm PU}(k)$-bundles over $X$. There is also a homotopy
description of the previous set: each principal ${\rm
PU}(k)$-bundle over $X$ is classified by some map $X\rightarrow
{\rm BPU}(k)$ which is unique up to homotopy, i.e. there exists a
natural in $X$ bijection $H^1(X,\,
\underline{{\rm PU}(k)})\cong [X,\, {\rm BPU}(k)]$, where $[X,\,
Y]$ denotes the set of homotopy classes of maps $X\rightarrow Y.$

We also have the exact sequence of sheaves
\begin{equation}
\label{exsecp} 1\rightarrow \underline{{\rm U}(1)}\rightarrow
\underline{{\rm U}(k})
\rightarrow \underline{{\rm PU}(k)}\rightarrow 1
\end{equation}
corresponding to the exact sequence of groups (\ref{prinbundf}) and
the corresponding coboundary homomorphism $\delta_k \colon
H^1(X,\, \underline{{\rm PU}(k)})\rightarrow H^2(X,\,
\underline{{\rm U}(1)})$. It is easy to prove that {\it every
element of finite order in $H^2(X,\, \underline{{\rm U}(1)})\cong
H^3(X,\, \mathbb{Z})$ belongs to the image of $\delta_k$ for some
$k$}. In other words, {\it any bundle gerbe with Dixmier-Douady class
of finite order is stably equivalent to some bundle gerbe given by
the previous construction applied to a $\PU(k)$-projective cocycle}.
\end{remark}

The tensor product of finite order bundle gerbes corresponds to the
homomorphisms $\PU(k^m)\times \PU(k^n)\rightarrow \PU(k^{m+n})$
giving by the Kronecker product of matrices.
The corresponding {\it finite Brauer group} is
$$
Br_k(X):=\im \{ [X,\, \BPU(k^\infty)]\rightarrow [X,\, \BPU({\mathcal{H}})]=Br(X)\},
$$
the $k$-torsion subgroup in $Br(X)$ (this justifies the name ``finite order'').

\section{Bundle gerbe modules}

As we stated in the Introduction,
twisted $K$-theory is a functor
from the groupoid of twists $Tw(X)$ ($\mathcal{BG}(X)$
in our case) to abelian groups. Here we shall define it (first
as a functor to abelian semigroups).

\subsection{Definition of a bundle gerbe module}

\begin{definition}
\cite{BCMMS} A (right) module $(E,\, \varepsilon,\, \mathcal{U})$
over a bundle gerbe $(L,\, \theta,\, \mathcal{U})$
is a collection of vector bundles $E_\alpha\rightarrow U_\alpha$ equipped with isomorphisms
$\varepsilon_{\alpha \beta}\colon E_\alpha \otimes L_{\alpha \beta}\rightarrow E_\beta$
over $U_{\alpha \beta}$ such that diagrams
$$
\diagram
E_\alpha \otimes L_{\alpha \beta}\otimes L_{\beta \gamma}\rto^{\id \otimes \theta_{\alpha \beta \gamma}}
\dto_{\varepsilon_{\alpha \beta}\otimes \id} & E_\alpha \otimes L_{\alpha \gamma}
\dto^{\varepsilon_{\alpha \gamma}}\\
E_\beta\otimes L_{\beta \gamma} \rto^{\varepsilon_{\beta \gamma}} & E_\gamma
\enddiagram
$$
over $U_{\alpha \beta \gamma}$ commute.
\end{definition}

By ${\rm Mod}(L)$ denote the set of all isomorphism classes of bundle gerbe modules over
$(L,\, \theta,\, \mathcal{U})$. Given two modules $(E,\, \varepsilon,\, \mathcal{U})$
and $(E^\prime,\, \varepsilon^\prime,\, \mathcal{U})$ over the same $(L,\, \theta,\, \mathcal{U})$
one can define their direct sum $(E\oplus E^\prime,\, \varepsilon \oplus \varepsilon^\prime,\, \mathcal{U})$
which is an $(L,\, \theta,\, \mathcal{U})$-module again.
Therefore ${\rm Mod}(L)$ is an abelian semigroup. Thereby we have defined the
functor ${\rm Mod}$ on objects of $\mathcal{BG}(X)$.
Note that ${\rm Mod}(T)={\rm Bun}(X),$ where $T$ and ${\rm Bun}(X)$ are the strictly trivial bundle gerbe
and the semigroup of vector bundles over $X$.

\begin{proposition}
A 1-morphism $(M,\, \varphi)$ from
$(L,\, \theta,\, \mathcal{U})$ to $(L^\prime,\, \theta^\prime,\, \mathcal{U})$
gives rise to a semigroup homomorphism ${\rm Mod}(M)\colon {\rm Mod}(L)\rightarrow{\rm Mod}(L^\prime)$
such that
$$
{\rm Mod}(N)\circ {\rm Mod}(M)={\rm Mod}(N\circ M)\quad \hbox{and}\quad {\rm Mod}(\id_L)=\id_{{\rm Mod}(L)}
$$
for a 1-morphism
$N\colon(L^\prime,\, \theta^\prime,\, \mathcal{U})\rightarrow (L^{\prime \prime},\, \theta^{\prime \prime},\, \mathcal{U}).$
As a corollary, ${\rm Mod}(M)$ is an isomorphism for all 1-morphisms $M\colon L\rightarrow L^\prime.$
\end{proposition}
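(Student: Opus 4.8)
The plan is to construct ${\rm Mod}(M)$ explicitly on the level of module data and then verify functoriality by a direct (but careful) bookkeeping of tensor factors. Given a 1-morphism $(M,\, \varphi)\colon (L,\, \theta,\, \mathcal{U})\to (L^\prime,\, \theta^\prime,\, \mathcal{U})$ and an $L$-module $(E,\, \varepsilon,\, \mathcal{U})$, I would first note that the inverse 1-morphism $(N,\, \psi)\colon L^\prime\to L$ is available by the construction in the subsection on the 2-groupoid $\mathcal{BG}(X)$, with $N_\alpha = M_\alpha^*$. The natural thing is then to tensor $E$ with the dual bundles $M_\alpha^*$: set $(E\cdot M)_\alpha := E_\alpha\otimes M_\alpha^*$ over $U_\alpha$ and define the gluing isomorphisms $(E\cdot M)_\alpha\otimes L^\prime_{\alpha\beta}\to (E\cdot M)_\beta$ by combining $\varepsilon_{\alpha\beta}$ with $\varphi_{\alpha\beta}^{-1}$ appropriately — concretely, $E_\alpha\otimes M_\alpha^*\otimes L^\prime_{\alpha\beta}$ maps, using the defining isomorphism $\varphi_{\alpha\beta}\colon L_{\alpha\beta}\otimes M_\beta \xrightarrow{\cong} M_\alpha\otimes L^\prime_{\alpha\beta}$ rearranged as an isomorphism $M_\alpha^*\otimes L^\prime_{\alpha\beta}\cong L_{\alpha\beta}\otimes M_\beta^*$, to $E_\alpha\otimes L_{\alpha\beta}\otimes M_\beta^*$, and then $\varepsilon_{\alpha\beta}\otimes\id$ produces $E_\beta\otimes M_\beta^*$, as required. (One uses here that line bundles are invertible, so $L_{\alpha\beta}\otimes M_\beta \cong M_\alpha\otimes L^\prime_{\alpha\beta}$ is equivalent to $M_\alpha^*\otimes L^\prime_{\alpha\beta}\cong L_{\alpha\beta}\otimes M_\beta^*$.) Finally one checks the module cocycle condition for the new $\varepsilon$ over threefold overlaps; this is exactly the diagram obtained by pasting the module square for $E$ with the 1-morphism square \eqref{1morphdef}, so it commutes. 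This defines $(E\cdot M,\, \varepsilon^{M},\, \mathcal{U})$ as an $L^\prime$-module, and since direct sums are respected factorwise, $E\mapsto E\cdot M$ descends to a semigroup homomorphism ${\rm Mod}(M)\colon {\rm Mod}(L)\to {\rm Mod}(L^\prime)$.

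Next I would verify the two functoriality identities. For ${\rm Mod}(\id_L)=\id_{{\rm Mod}(L)}$: the identity 1-morphism has $M_\alpha = U_\alpha\times\mathbb{C}$ and $\varphi_{\alpha\beta}$ the canonical flip, so $E_\alpha\otimes M_\alpha^*\cong E_\alpha$ canonically, and under these identifications $\varepsilon^{M}_{\alpha\beta}$ becomes $\varepsilon_{\alpha\beta}$; hence the output module is isomorphic to $(E,\,\varepsilon,\,\mathcal{U})$, i.e.\ ${\rm Mod}(\id_L)$ is the identity on isomorphism classes. For composition: given $(N,\, \psi)\colon L^\prime\to L^{\prime\prime}$ with composite $(P,\,\chi)$, $P_\alpha = M_\alpha\otimes N_\alpha$ and $\chi_{\alpha\beta}=(1\otimes\psi_{\alpha\beta})\circ(\varphi_{\alpha\beta}\otimes 1)$ as recalled in the subsection on the category of bundle gerbes. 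Then $(E\cdot M)\cdot N$ has underlying bundles $E_\alpha\otimes M_\alpha^*\otimes N_\alpha^*\cong E_\alpha\otimes (M_\alpha\otimes N_\alpha)^* = E\cdot P$, naturally in $\alpha$, and one checks that the gluing isomorphism for $(E\cdot M)\cdot N$ — built by applying the $M$-recipe and then the $N$-recipe — agrees under this identification with the $P$-recipe applied to $\chi$. This is a routine diagram chase: it amounts to noting that $\chi_{\alpha\beta}^{-1}$ on the dual side factors as the $\psi$-step followed by the $\varphi$-step, which is exactly the order in which the two recipes are composed. So ${\rm Mod}(N)\circ{\rm Mod}(M)={\rm Mod}(N\circ M)$.

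The corollary that ${\rm Mod}(M)$ is always an isomorphism is then formal: by the 2-groupoid structure, every 1-morphism $M\colon L\to L^\prime$ has an inverse $N\colon L^\prime\to L$ with $N\circ M\cong \id_L$ and $M\circ N\cong \id_{L^\prime}$ up to 2-morphism. The functoriality identities give ${\rm Mod}(N)\circ{\rm Mod}(M) = {\rm Mod}(N\circ M)$ and ${\rm Mod}(M)\circ{\rm Mod}(N) = {\rm Mod}(M\circ N)$; so it remains only to observe that 2-isomorphic 1-morphisms induce the same map on ${\rm Mod}$ (a 2-morphism $\omega\colon M\to M^\prime$ provides, factorwise, bundle isomorphisms $\kappa_\alpha\colon M_\alpha\to M^\prime_\alpha$ compatible with the $\varphi$'s, hence isomorphisms $E\cdot M\cong E\cdot M^\prime$ of $L^\prime$-modules). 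Combining, ${\rm Mod}(N\circ M)={\rm Mod}(\id_L)=\id$ and likewise on the other side, so ${\rm Mod}(M)$ and ${\rm Mod}(N)$ are mutually inverse semigroup isomorphisms.

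The step I expect to be the main obstacle is the composition identity: one must be scrupulous about the canonical identification $(M_\alpha\otimes N_\alpha)^*\cong M_\alpha^*\otimes N_\alpha^*$ and the reordering of tensor factors, because the $M$- and $N$-recipes insert $\varphi^{-1}$ and $\psi^{-1}$ on opposite-looking slots, and it is easy to get a sign-of-ordering discrepancy. Everything does cohere because the associativity constraints in $\mathcal{BG}(X)$ are themselves coherent (the whole construction is a weak 2-functor from $\mathcal{BG}(X)$ to the 2-category of abelian semigroups and homomorphisms), but writing the comparison isomorphism down cleanly requires choosing the bracketings consistently once and for all.
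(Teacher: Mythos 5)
Your construction of ${\rm Mod}(M)$ contains a genuine error: you tensor with the duals $M_\alpha^*$ and glue via an isomorphism $M_\alpha^*\otimes L'_{\alpha\beta}\cong L_{\alpha\beta}\otimes M_\beta^*$ that the data do not provide. Tensoring $\varphi_{\alpha\beta}\colon L_{\alpha\beta}\otimes M_\beta\xrightarrow{\cong}M_\alpha\otimes L'_{\alpha\beta}$ with $M_\alpha^*\otimes M_\beta^*$ and contracting yields $M_\alpha^*\otimes L_{\alpha\beta}\cong L'_{\alpha\beta}\otimes M_\beta^*$ — this is exactly the structure map $\psi_{\alpha\beta}$ of the inverse $1$-morphism $(M^*,\psi)\colon L'\to L$ — and \emph{not} the isomorphism you assert, in which $L$ and $L'$ sit in the opposite slots. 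The two statements differ by a canonical trivialization of $(L'_{\alpha\beta}\otimes L_{\alpha\beta}^*)^{\otimes 2}$, which is not available. Concretely, if you track transition data, $\varphi$ gives $M_\beta^*\cong L_{\alpha\beta}\otimes M_\alpha^*\otimes (L'_{\alpha\beta})^*$, so $E_\beta\otimes M_\beta^*\cong E_\alpha\otimes M_\alpha^*\otimes L_{\alpha\beta}^{\otimes 2}\otimes (L'_{\alpha\beta})^*$: your $E\cdot M$ is naturally a module over the bundle gerbe $L^{\otimes 2}\otimes (L')^{-1}$, not over $L'$. The paper's recipe avoids this entirely: set $F_\alpha:=E_\alpha\otimes M_\alpha$ (no dual) and $\zeta_{\alpha\beta}:=(\varepsilon_{\alpha\beta}\otimes 1)\circ(1\otimes\varphi_{\alpha\beta}^{-1})\colon E_\alpha\otimes M_\alpha\otimes L'_{\alpha\beta}\to E_\alpha\otimes L_{\alpha\beta}\otimes M_\beta\to E_\beta\otimes M_\beta$; the cocycle condition over $U_{\alpha\beta\gamma}$ is then verified by exactly the pasting of the module square with diagram (\ref{1morphdef}) that you describe.

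Once the recipe is corrected in this way, the remainder of your argument is sound and in fact spells out more than the paper does (the paper's proof only checks that $(F,\zeta,\mathcal{U})$ is an $L'$-module): the identity ${\rm Mod}(\id_L)=\id$, the composition identity with $P_\alpha=M_\alpha\otimes N_\alpha$ and $\chi_{\alpha\beta}=(1\otimes\psi_{\alpha\beta})\circ(\varphi_{\alpha\beta}\otimes 1)$ (now with no duals, so no bookkeeping of $(M_\alpha\otimes N_\alpha)^*$ is needed), and the observation that $2$-isomorphic $1$-morphisms induce the same map on isomorphism classes — which is genuinely needed for the corollary, since $N\circ M\cong\id_L$ only up to $2$-morphism. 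So the overall architecture of your proof is right; the flaw is localized in the definition of ${\rm Mod}(M)$, where the dual bundles must be dropped.
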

{\noindent \it Proof}.
For an $(L,\, \theta,\, \mathcal{U})$-module $(E,\, \varepsilon,\,
\mathcal{U})$ and a morphism $(M,\, \varphi)\colon L\rightarrow L'$
consider the collection of bundles
$F_\alpha:=E_\alpha\otimes M_\alpha \rightarrow U_\alpha.$
There are isomorphisms
$$
E_\beta \otimes M_\beta \stackrel{\varepsilon_{\alpha \beta}\otimes 1}{\longleftarrow}
E_\alpha \otimes L_{\alpha \beta}\otimes M_\beta
\stackrel{1\otimes \varphi_{\alpha \beta}}{\longrightarrow}
E_\alpha \otimes M_\alpha \otimes L'_{\alpha \beta}.
$$
Define isomorphisms
$$
\zeta_{\alpha \beta}\colon F_\alpha \otimes L'_{\alpha \beta}\rightarrow F_\beta
\quad \hbox{as}\;\; \zeta_{\alpha \beta}:=
(\varepsilon_{\alpha \beta}\otimes 1)\circ (1\otimes \varphi_{\alpha \beta}^{-1}).
$$
Now the commutative diagram
\begin{equation}
\label{1morphdefm}
\diagram
E_\alpha\otimes M_\alpha \otimes L'_{\alpha \beta}\otimes L'_{\beta \gamma}
\rto^{\quad 1\otimes \theta'_{\alpha \beta \gamma}} \dto_{1\otimes \varphi^{-1}_{\alpha \beta}\otimes 1}
& E_\alpha \otimes M_\alpha \otimes L'_{\alpha \gamma}\drto^{1\otimes \varphi^{-1}_{\alpha \gamma}} & \\
E_\alpha \otimes L_{\alpha \beta}\otimes M_\beta \otimes L'_{\beta \gamma}
\rto^{1\otimes \varphi^{-1}_{\beta \gamma}}\dto_{\varepsilon_{\alpha \beta}\otimes 1} &
E_\alpha \otimes L_{\alpha \beta}\otimes L_{\beta \gamma}\otimes M_\gamma
\rto^{\quad 1\otimes \theta_{\alpha \beta \gamma}\otimes 1} \dto_{\varepsilon_{\alpha \beta}\otimes 1} &
E_\alpha \otimes L_{\alpha \gamma}\otimes M_\gamma \dto^{\varepsilon_{\alpha \gamma}\otimes 1}\\
E_\beta\otimes M_\beta \otimes L'_{\beta \gamma}\rto^{1\otimes \varphi^{-1}_{\beta \gamma}} &
E_\beta \otimes L_{\beta \gamma}\otimes M_\gamma \rto^{\varepsilon_{\beta \gamma}\otimes 1} &
E_\gamma \otimes M_\gamma \\
\enddiagram
\end{equation}
shows that $(F,\, \zeta,\, \mathcal{U})$ is an $(L',\, \vartheta',\,\mathcal{U})$-module.$\quad \qed$

\bigskip

In particular, a
trivialization
$(\eta,\, \varphi)\colon (L,\, \theta,\, \mathcal{U})\rightarrow (T,\, \tau ,\, \mathcal{T})$
determines isomorphisms
${\rm Mod}(\eta) \colon {\rm Mod}(L)\rightarrow {\rm Bun}(X)(={\rm Mod}(T))$
of the corresponding semigroups.
Indeed, for an $(L,\, \theta,\, \mathcal{U})$-module
$(E,\, \varepsilon,\, \mathcal{U})$
put $F_\alpha:=E_\alpha \otimes \eta_\alpha.$
Then we have isomorphisms
$$
E_\beta\otimes \eta_\beta \stackrel{\varepsilon_{\alpha \beta}\otimes 1}{\longleftarrow}
E_\alpha \otimes L_{\alpha \beta}\otimes \eta_\beta\stackrel{1\otimes \varphi_{\alpha \beta}}
{\longrightarrow}E_\alpha \otimes \eta_\alpha
$$
over $U_{\alpha \beta}$ and commutative diagrams of isomorphisms
$$
\diagram
& F_\beta =E_\beta \otimes \eta_\beta & \\
E_\beta \otimes L_{\beta \gamma}\otimes \eta_\gamma \dto_{\varepsilon_{\beta \gamma}\otimes 1}
\urto^{1\otimes \varphi_{\beta \gamma}} &
E_\alpha \otimes L_{\alpha \beta}\otimes L_{\beta \gamma}\otimes \eta_\gamma
\lto_{\varepsilon_{\alpha \beta}\otimes 1} \dto^{1\otimes \theta_{\alpha \beta \gamma}\otimes 1}
\rto^{1\otimes \varphi_{\beta \gamma}} &
E_\alpha \otimes L_{\alpha \beta}\otimes \eta_\beta \dto^{1\otimes \varphi_{\alpha \beta}}
\ulto_{\varepsilon_{\alpha \beta}\otimes 1} \\
F_\gamma=E_\gamma \otimes \eta_\gamma &
E_\alpha \otimes L_{\alpha \gamma}\otimes \eta_\gamma
\lto_{\varepsilon_{\alpha \gamma}\otimes 1} \rto^{1\otimes \varphi_{\alpha \gamma}} &
E_\alpha \otimes \eta_\alpha=F_\alpha  \\
\enddiagram
$$
over $U_{\alpha \beta \gamma}.$ We see that this data indeed
gives rise to a vector bundle $F$ over $X$.

In order to define the inverse map ${\rm Bun}(X)\rightarrow {\rm Mod}(L)$ note that $\varepsilon_{\alpha \beta}\colon L_{\alpha \beta}\otimes \eta_\beta
\rightarrow \eta_\alpha$ give isomorphisms $\eta_\alpha^*\otimes L_{\alpha \beta}\rightarrow \eta^*_\beta.$
For a vector bundle $F\rightarrow X$ define the collection $\{ E_\alpha =F\otimes \eta^*_\alpha\}$
together with isomorphisms $F\otimes \eta^*_\alpha \otimes L_{\alpha \beta}\rightarrow F\otimes \eta^*_\beta,$
i.e. with $E_\alpha \otimes L_{\alpha \beta}\rightarrow E_\beta.$ It is easy to see that we obtain a left $L$-module.

Note that different choices of trivializations
give rise to the action of the Picard group $Pic(X)$
(which is the group of equivalence classes of automorphisms of the trivial twist)
on ${\rm Bun}(X).$

\subsection{Bundle gerbe $K$-theory}

We have the following result \cite{BCMMS}.

\begin{proposition}
\label{focfoc}
If $(L,\, \theta,\, \mathcal{U})$ has a module $(E,\, \varepsilon,\, \mathcal{U})$ of finite
rank $r$ then its Dixmier-Douady class $DD(L)\in H^3(X,\, \mathbb{Z})$
satisfies $rDD(L)=0$.
\end{proposition}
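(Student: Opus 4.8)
The plan is to extract a global line bundle from the module data whose $r$-th tensor power represents $r\cdot DD(L)$, and then show this power is canonically trivial. Concretely, suppose $(E,\,\varepsilon,\,\mathcal{U})$ is a module of constant rank $r$ over $(L,\,\theta,\,\mathcal{U})$, and choose a good cover. First I would take determinants: from the isomorphisms $\varepsilon_{\alpha\beta}\colon E_\alpha\otimes L_{\alpha\beta}\to E_\beta$ over $U_{\alpha\beta}$ one gets, upon applying $\det$, isomorphisms $\det(E_\alpha)\otimes L_{\alpha\beta}^{\otimes r}\to \det(E_\beta)$ over $U_{\alpha\beta}$, since $\det(V\otimes W)\cong \det(V)^{\otimes \dim W}\otimes \det(W)^{\otimes \dim V}$ and $L_{\alpha\beta}$ is a line bundle while $E_\alpha$ has rank $r$. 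The module compatibility diagram, under $\det$, becomes exactly the compatibility that says the collection $\{\det(E_\alpha)\}$ together with these isomorphisms is a right trivialization (in the sense of diagram (\ref{rtrivdef})) of the bundle gerbe $(L^{\otimes r},\,\theta^{\otimes r},\,\mathcal{U})$, i.e.\ of the $r$-fold tensor power $L^{\otimes r}$.

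Next I would invoke the fact, recorded in the subsection on trivializations, that a bundle gerbe admits a trivialization if and only if its Dixmier–Douady class vanishes. Hence $DD(L^{\otimes r})=0$. Since the Dixmier–Douady class is additive under tensor product of bundle gerbes (the isomorphism $\Phi'$ of subsection \ref{DDclasssubsec} / Definition \ref{def:of_Brauergrp} is an isomorphism of functors with values in abelian groups), we have $DD(L^{\otimes r}) = r\cdot DD(L)$ in $H^3(X,\,\mathbb{Z})$. Combining, $r\cdot DD(L)=0$, which is the claim.

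The only real point to be careful about is the behaviour of $\det$ on the relevant tensor products and the bookkeeping of which power of $L_{\alpha\beta}$ appears; once one fixes that $E_\alpha$ has rank exactly $r$ on every $U_\alpha$ (here one uses that $X$ is connected, or else argues componentwise), the exponent is forced to be $r$ and the trivialization of $L^{\otimes r}$ drops out formally from the module axiom. The additivity $DD(L\otimes L')=DD(L)+DD(L')$ and the equivalence ``trivialization exists $\Leftrightarrow$ $DD=0$'' are both already available in the text, so the argument is essentially the two-line reduction above. An alternative, more hands-on route avoids mentioning trivializations: pick unit-modulus local sections $\sigma_{\alpha\beta}$ of $L_{\alpha\beta}$ as in subsection \ref{DDclasssubsec}, so that $\theta_{\alpha\beta\gamma}(\sigma_{\alpha\beta}\otimes\sigma_{\beta\gamma})=\lambda_{\alpha\beta\gamma}\sigma_{\alpha\gamma}$ represents $DD(L)$; using the $\varepsilon_{\alpha\beta}$ to glue the $\det(E_\alpha)$ after twisting by $\sigma_{\alpha\beta}^{\otimes r}$ exhibits $\{\lambda_{\alpha\beta\gamma}^{\,r}\}$ as a coboundary in $\check{\rm C}(\mathcal{U},\underline{\U(1)})$, hence $r\,\delta([\lambda]) = \delta(r[\lambda]) = 0$. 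I expect the main (minor) obstacle to be organizing the determinant identity cleanly; there is no deep obstacle, which matches the paper's label of this as a known result of \cite{BCMMS}.
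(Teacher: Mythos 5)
Your argument is correct, but it is not the route the paper itself takes. At the point where the proposition is stated the paper simply cites \cite{BCMMS}, and the proof it actually supplies comes later, in subsection \ref{maeqbg}: there a rank-$r$ module $(E,\varepsilon,\mathcal{U})$ is repackaged as the matrix algebra bundle ${\rm End}(E)$, i.e.\ as a principal $\PU(r)$-bundle, so that the module is ``nothing but'' a lift $\tilde f_L\colon X\rightarrow \BPU(r)$ of the classifying map $f_L\colon X\rightarrow \K(\mathbb{Z},3)$ through $DD\colon \BPU(r)\rightarrow \K(\mathbb{Z},3)$; since the Dixmier--Douady class of any $\PU(r)$-bundle is $r$-torsion, $rDD(L)=0$ follows. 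Your proof instead stays at the cocycle level: applying $\det$ to $\varepsilon_{\alpha\beta}\colon E_\alpha\otimes L_{\alpha\beta}\rightarrow E_\beta$ (with $\det(V\otimes \ell)\cong\det(V)\otimes\ell^{\otimes r}$ for $\ell$ a line bundle and $V$ of rank $r$) produces a rank-one module over, equivalently a trivialization of, $(L^{\otimes r},\theta^{\otimes r})$, and then the two facts already recorded in the text --- additivity of $DD$ under tensor product and ``trivialization exists $\Leftrightarrow DD=0$'' --- give $rDD(L)=DD(L^{\otimes r})=0$; your \v{C}ech variant exhibiting $\{\lambda_{\alpha\beta\gamma}^{\,r}\}$ as a coboundary is the same computation and is essentially the original BCMMS argument. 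The trade-off: your determinant argument is elementary and self-contained, needing only constancy of the rank (connectedness, or a componentwise argument) and care with the orientation conventions (the paper's modules have $E_\alpha\otimes L_{\alpha\beta}\rightarrow E_\beta$ while right trivializations have $L_{\alpha\beta}\otimes\eta_\beta\rightarrow\eta_\alpha$, so strictly you get a trivialization of $L^{\otimes r}$ or of its dual --- harmless, since both have the same vanishing locus of $DD$); the paper's argument is less hands-on but buys the conceptual identification of modules with Azumaya bundles and with lifts of classifying maps, which it reuses elsewhere. Both are complete proofs.
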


Given a bundle gerbe $(L,\, \theta ,\, \mathcal{U})$ whose Dixmier-Douady class is of finite order
we define its $K$-group, $K(L)$, as the Grothendieck group of the semigroup ${\rm Mod}(L).$
Then an equivalence between $(L,\, \theta ,\, \mathcal{U})$ and $(L^\prime,\, \theta^\prime ,\, \mathcal{V})$
gives rise to a particular isomorphism between $K(L)$ and $K(L^\prime)$,
i.e.~$K(L)$ up to isomorphism (not canonical)
depends only on the class $DD(L,\, \theta ,\, \mathcal{U})$ of $(L,\, \theta ,\, \mathcal{U})$ in
$Br_k(X).$

The following properties of $K(L)$ can also be easily verified \cite{BCMMS}.

\begin{theorem}
\begin{itemize}
\item[(i)] If $DD(L,\, \theta ,\, \mathcal{U})=0,$ then $K(L)\cong K(X).$
\item[(ii)] $K(L)$ is a $K(X)$-module.
\item[(iii)] There are homomorphisms $K(L)\otimes K(L^\prime)\rightarrow
  K(L\otimes L^\prime)$ which satisfy the expected
    associativity.
\item[(iv)] For $f\colon X\rightarrow Y$ and a bundle gerbe $(L,\, \theta,\,
  \mathcal{U})$ over $Y$ we have a homomorphism $K(L)\rightarrow K(f^*(L))$,
 making $K(L)$ a functor.
\end{itemize}
\end{theorem}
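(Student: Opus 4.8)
The plan is to treat each of the four items in turn, reducing everything either to the previously established fact that equivalences of bundle gerbes induce semigroup isomorphisms on ${\rm Mod}(-)$, or to the already-proven constructions with modules (pullback, tensor product, direct sum). None of the four parts is hard; the point is to be careful about which constructions descend to the Grothendieck groups.

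For (i): if $DD(L,\,\theta,\,\mathcal{U})=0$ then by the results of subsection \ref{DDclasssubsec} the bundle gerbe $(L,\,\theta,\,\mathcal{U})$ is equivalent to the strictly trivial bundle gerbe $T$, hence admits a trivialization $\eta\colon L\to T$. By the discussion following Proposition \ref{focfoc} (the paragraph on ${\rm Mod}(\eta)$) this gives a semigroup isomorphism ${\rm Mod}(L)\cong {\rm Mod}(T)={\rm Bun}(X)$, and passing to Grothendieck groups yields $K(L)\cong K(X)$. For (iv): given $f\colon X\to Y$ and $(L,\,\theta,\,\mathcal{U})$ over $Y$, one pulls back a module $(E,\,\varepsilon,\,\mathcal{U})$ over $L$ to the collection $\{f^*E_\alpha\}$ over $\{f^{-1}U_\alpha\}$ with the pulled-back gluing isomorphisms $f^*\varepsilon_{\alpha\beta}$; the commuting squares pull back to commuting squares, so this is an $f^*(L)$-module. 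Pullback is additive and compatible with isomorphism, so it descends to a homomorphism $K(L)\to K(f^*(L))$; functoriality in the two-step sense $(g\circ f)^*=f^*\circ g^*$ is immediate from the corresponding property for vector bundles.

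For (iii): given an $(L,\,\theta,\,\mathcal{U})$-module $(E,\,\varepsilon,\,\mathcal{U})$ and an $(L',\,\theta',\,\mathcal{V})$-module $(E',\,\varepsilon',\,\mathcal{V})$, pass to the common refinement $\mathcal{W}=\{W_{\alpha\lambda}\}$, $W_{\alpha\lambda}=U_\alpha\cap V_\lambda$, and form the bundles $(E\boxtimes_X E')_{\alpha\lambda}:=E_\alpha|_{W_{\alpha\lambda}}\otimes E'_\lambda|_{W_{\alpha\lambda}}$ with gluing maps $\varepsilon_{\alpha\beta}\otimes\varepsilon'_{\lambda\mu}$; using the definition of the tensor product bundle gerbe from subsection \ref{3gr}, the module axiom over four-fold overlaps follows by tensoring the two module diagrams. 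This is biadditive in $E$ and $E'$ and compatible with isomorphisms, hence induces a bilinear pairing $K(L)\otimes K(L')\to K(L\otimes L')$; associativity follows from associativity of the tensor product of bundle gerbes together with the coherence isomorphisms, i.e. from associativity of $\otimes$ for vector bundles. For (ii): the $K(X)$-module structure is the special case of (iii) with $L'=T$, since $K(T)=K(X)$ and $L\otimes T$ is canonically equivalent to $L$; one checks the resulting map $K(X)\otimes K(L)\to K(L)$ satisfies the module axioms, again a consequence of coherence for $\otimes$.

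The main obstacle, such as it is, is bookkeeping with open covers: constructions like (iii) and (ii) require passing to common refinements and verifying that the various identifications (associators, the equivalence $L\otimes T\simeq L$, restriction to refinements) are coherent enough that the induced maps on $K$-groups are well defined and satisfy the stated associativity. This is purely formal — it reduces to coherence of $\otimes$ on ${\rm Bun}(X)$ — but it is where one must be most careful, and it is the reason the theorem's statement is phrased only up to ``the expected associativity'' rather than as strict equalities.
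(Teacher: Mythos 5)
Your proposal is correct, and it fills in exactly the kind of routine verification the paper itself omits: the text simply states that these properties ``can also be easily verified'' with a reference to \cite{BCMMS}, so there is no in-paper proof to compare against. Your argument uses precisely the constructions the paper has already set up --- the semigroup isomorphism ${\rm Mod}(\eta)\colon{\rm Mod}(L)\to{\rm Bun}(X)$ induced by a trivialization for (i), pullback of modules for (iv), and the tensor product of modules over the common refinement for (iii), with (ii) as the case $L'=T$ --- and each step descends to Grothendieck groups as you say; the only caveat worth recording (and the paper makes it elsewhere) is that the isomorphism in (i) is not canonical, since it depends on the choice of trivialization up to the action of $Pic(X)$.
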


\subsection{Relation between bundle gerbe modules and Azumaya bundles}
\label{maeqbg}

Assume that $L:=(L,\, \theta,\, \mathcal{U})$ is a bundle gerbe with a torsion Dixmier-Douady class.
Then it admits some module $(E,\, \varepsilon,\, {\mathcal U}).$
Note that $(E,\, \varepsilon,\, {\mathcal U})$ gives rise to a global
matrix algebra bundle ${\rm End}(E)\rightarrow X$ (and every matrix algebra bundle can be obtained in this way). Indeed, isomorphisms $\varepsilon_{\alpha \beta}
\colon E_\alpha \otimes L_{\alpha \beta}\rightarrow E_\beta$ give rise to isomorphisms
$\bar{\varepsilon}_{\alpha \beta}\colon {\rm End}(E_\alpha)\rightarrow {\rm End}(E_\beta)$
which satisfy the cocycle condition.
(More precisely,
$\varepsilon^*_{\alpha \beta}\colon E_\beta^*\rightarrow E^*_\alpha \otimes L_{\alpha \beta}^*$
give rise to maps $L_{\alpha \beta}\otimes E_\beta^*\rightarrow E_\alpha^*$
which allow to define isomorphisms
\begin{equation}
\label{overlapscond}
E_\beta \otimes E_\beta^*\leftarrow E_\alpha \otimes L_{\alpha \beta}
\otimes E_\beta^*\rightarrow E_\alpha \otimes E_\alpha^*
\end{equation}
on twofold overlaps etc.).
The obtained global bundle ${\rm End}(E)$
can be regarded as a Morita bundle gerbe (cf. the definition of a strictly trivial bundle gerbe).
Then the bundle gerbe module $(E,\, \varepsilon,\, {\mathcal U})$ is nothing but a 1-morphism
${\rm End}(E)\rightarrow (L,\, \theta,\, \mathcal{U})$ of Morita bundle gerbes.
Indeed, isomorphisms
$$
{\rm End}E|_{U_\beta}{\mathop{\otimes}\limits_{{\rm End} E|_{U_\beta}}}E_\beta\stackrel{can}{\cong}
E_\beta\stackrel{\varepsilon_{\alpha \beta}}{\leftarrow}E_\alpha \otimes L_{\alpha \beta}
$$
play exactly the role of isomorphisms $\varphi_{\alpha \beta}$ in Definition \ref{onemorpdef}.
(Let us remark the analogy with trivialization:
like a stably trivial BG, $L$ (with $DD(L)$ of finite order) is Morita-equivalent to a global matrix algebra bundle ${\rm End}(E),$
but this time not necessarily $1$-dimensional or even trivial).

If $(E,\, \varepsilon)$ has rank $r$, then it is nothing but a fiberwise
homotopy class of lifts $\tilde{f}_L$ of the classifying map
$f_L\colon X\rightarrow \K(\mathbb{Z},\, 3)$ of the bundle gerbe
$(L,\, \theta,\, \mathcal{U})$
in the fibration
$$
\diagram
\BU(r)\rto & \BPU(r)\dto^{DD} \\
X\urto^{\tilde{f}_L} \rto_{f_L} & \K(\mathbb{Z},\, 3).\\
\enddiagram
$$
This gives another proof of Proposition \ref{focfoc}.

\subsection{An isomorphism between bundle gerbe $K$-theory and Azumaya algebra bundle $K$-theory}
\label{eqcatLEndmod}

We have seen that a trivialization of a bundle gerbe $L$ determines a semigroup isomorphisms between ${\rm Mod}(L)$ and ${\rm Bun}(X)$.
One can expect that an $L$-module $E$ gives rise to an isomorphism $E_*\colon {\rm Mod}(L)\rightarrow {\rm Mod}({\rm End}(E)),$
where ${\rm Mod}({\rm End}(E))$ is the semigroup of projective modules over the global Azumaya bundle ${\rm End}(E)\rightarrow X$
in the common sense.

Let $L:=(L,\, \theta,\, \mathcal{U})$ be a bundle gerbe with a torsion Dixmier-Douady class, let
$(E,\, \varepsilon,\, {\mathcal U})$ be a left module over $L$ of finite rank. 
We are going to describe the explicit additive isomorphism between the category of $L$-modules and the category of ${\rm End}(E)$-modules and thereby
between the $K$-theory of $L$ and the $K$-theory
of the matrix algebra (Azumaya) bundle ${\rm End}(E)$ (note that ${\rm End}(E)$ has the same Dixmier-Douady class as $L$)
given by $(E,\, \varepsilon,\, {\mathcal U})$.

Let $F:=(F,\, \rho,\, {\mathcal U})$ be a right $L$-module. The left ${\rm End}(E)|_{U_\alpha}$-module $E_\alpha \otimes F_\alpha\rightarrow U_\alpha$
denote by $H_\alpha.$ We must show that $H_\alpha$'s give rise to a global ${\rm End}(E)$-module $H\rightarrow X.$
First, we start with isomorphisms
$$
H_\beta|_{U_{\alpha \beta}}=E_\beta\otimes F_\beta|_{U_{\alpha \beta}} \stackrel{\varepsilon_{\alpha \beta}\otimes 1}{\longleftarrow}
E_\alpha \otimes L_{\alpha \beta}\otimes F_\beta\stackrel{1\otimes \rho_{\alpha \beta}}
{\longrightarrow}E_\alpha \otimes F_\alpha|_{U_{\alpha \beta}}=H_\alpha|_{U_{\alpha \beta}}
$$
of left ${\rm End}(E)|_{U_{\alpha \beta}}$-modules (because this
isomorphisms are obviously compatible with isomorphisms (\ref{overlapscond})). Secondly, there are commutative diagrams of such isomorphisms
$$
\diagram
& H_\beta =E_\beta \otimes F_\beta & \\
E_\beta \otimes L_{\beta \gamma}\otimes F_\gamma \dto_{\varepsilon_{\beta \gamma}\otimes 1}
\urto^{1\otimes \rho_{\beta \gamma}} &
E_\alpha \otimes L_{\alpha \beta}\otimes L_{\beta \gamma}\otimes F_\gamma
\lto_{\varepsilon_{\alpha \beta}\otimes 1} \dto^{1\otimes \theta_{\alpha \beta \gamma}\otimes 1}
\rto^{1\otimes \rho_{\beta \gamma}} &
E_\alpha \otimes L_{\alpha \beta}\otimes F_\beta \dto^{1\otimes \rho_{\alpha \beta}}
\ulto_{\varepsilon_{\alpha \beta}\otimes 1} \\
H_\gamma=E_\gamma \otimes F_\gamma &
E_\alpha \otimes L_{\alpha \gamma}\otimes F_\gamma
\lto_{\varepsilon_{\alpha \gamma}\otimes 1} \rto^{1\otimes \rho_{\alpha \gamma}} &
E_\alpha \otimes F_\alpha=H_\alpha  \\
\enddiagram
$$
over $U_{\alpha \beta \gamma}.$ We see that this data indeed
gives rise to a left ${\rm End}(E)$-module $H$ over $X$. So $E$ plays the role of an $({\rm End}(E),\, L)$-bimodule.

In order to define the inverse map, for an ${\rm End}(E)$-module $H$ put $F_\alpha :=E^*_\alpha
{\mathop{\otimes}\limits_{{\rm End}(E)|_{U_\alpha}}}H|_{U_\alpha}$. Then using $L_{\alpha \beta}\otimes E^*_\beta \rightarrow E^*_\alpha$
we define maps $L_{\alpha \beta}\otimes F_\beta \rightarrow F_\alpha$ providing $\{ F_\alpha\}$ with the structure of a left $L$-module.


Finally, we see that bundle gerbes with their modules lead to the same $K$-theory as matrix algebra bundles
with the same Dixmier-Douady class of finite order.

\begin{theorem}
(cf. \cite{Karoubi2}, Theorem 3.5)
For any $L$-module $(E,\, \varepsilon ,\, \mathcal{U})$ the above construction defines the equivalence $E_*$ between the category of $L$-modules
and the category of ${\rm End}(E)$-modules, hence an isomorphism between their $K$-theories.
\end{theorem}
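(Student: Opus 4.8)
The plan is to verify that the two constructions given just before the statement—sending a right $L$-module $(F,\, \rho,\, \mathcal{U})$ to the global $\End(E)$-module $H$ with $H_\alpha = E_\alpha \otimes F_\alpha$, and sending an $\End(E)$-module $H$ to the left $L$-module with $F_\alpha = E_\alpha^* {\mathop{\otimes}\limits_{\End(E)|_{U_\alpha}}} H|_{U_\alpha}$—are mutually inverse functors, that they are additive, and that they respect $K$-theory. First I would promote both assignments from objects to functors: a morphism of $L$-modules is a collection of bundle maps commuting with the $\varepsilon_{\alpha\beta}$ (resp. $\rho_{\alpha\beta}$), and tensoring with $E_\alpha$ (resp. with $E_\alpha^*$ over $\End(E)|_{U_\alpha}$) carries such a collection to a compatible collection that descends to a global bundle map, by the same diagram chases already exhibited for objects. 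Additivity is immediate since both operations commute with direct sums fiberwise.

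Next I would check the two natural isomorphisms $E_* \circ (E_*)^{-1} \cong \Id$ and $(E_*)^{-1} \circ E_* \cong \Id$. For the first, starting from an $\End(E)$-module $H$ we form $F_\alpha = E_\alpha^* {\mathop{\otimes}\limits_{\End(E)|_{U_\alpha}}} H|_{U_\alpha}$ and then $E_\alpha \otimes F_\alpha = E_\alpha \otimes E_\alpha^* {\mathop{\otimes}\limits_{\End(E)|_{U_\alpha}}} H|_{U_\alpha}$; the canonical identification $E_\alpha \otimes E_\alpha^* \cong \End(E_\alpha)$ together with the module action gives a local isomorphism onto $H|_{U_\alpha}$, and one must check these local isomorphisms agree on overlaps—this is exactly compatibility with the gluing data, which follows because the isomorphisms $\varepsilon_{\alpha\beta}$ induce the transition isomorphisms (\ref{overlapscond}) of $\End(E)$. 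For the other composite, starting from a right $L$-module $F$ we form $H_\alpha = E_\alpha \otimes F_\alpha$ and then $E_\alpha^* {\mathop{\otimes}\limits_{\End(E)|_{U_\alpha}}} H_\alpha = E_\alpha^* {\mathop{\otimes}\limits_{\End(E)|_{U_\alpha}}} (E_\alpha \otimes F_\alpha) \cong F_\alpha$ canonically, and again one checks these canonical identifications are compatible with the respective gluing isomorphisms on $U_{\alpha\beta}$, so they assemble into a natural isomorphism of $L$-modules. (One should also note that $E$ having finite rank $r>0$ makes $E_\alpha$ fiberwise faithfully flat over $\mathbb{C}$, so $E_\alpha^* {\mathop{\otimes}\limits_{\End(E_\alpha)}} E_\alpha \cong \mathbb{C}_\alpha$ really holds—this is the classical Morita statement for matrix algebras applied fiberwise.)

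Since $E_*$ is an additive equivalence of categories, it induces an isomorphism of the associated abelian semigroups $\mathrm{Mod}(L) \cong \mathrm{Mod}(\End(E))$, and passing to Grothendieck groups yields the claimed isomorphism $K(L) \cong K(\End(E))$; by Proposition \ref{focfoc} (or the discussion in subsection \ref{maeqbg}) $\End(E)$ has the same Dixmier-Douady class as $L$, so this identifies bundle gerbe $K$-theory with Azumaya algebra bundle $K$-theory.

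The main obstacle I expect is the bookkeeping in checking that the locally defined canonical isomorphisms (from $E_\alpha \otimes E_\alpha^* \cong \End(E_\alpha)$ and the Morita cancellation $E_\alpha^* \otimes_{\End(E_\alpha)} E_\alpha \cong \mathbb{C}_\alpha$) are genuinely compatible with the three kinds of gluing data in play—the $\varepsilon_{\alpha\beta}$ for $E$, the transition functions (\ref{overlapscond}) for $\End(E)$, and the $\rho_{\alpha\beta}$ or module structure for $F$ resp. $H$—so that they descend to global natural isomorphisms rather than merely local ones. This is where the associativity/cocycle conditions (\ref{assoccond}) and the module axioms must be invoked; the commutative hexagons already drawn in subsection \ref{eqcatLEndmod} handle the object level, and the remaining work is to see that a morphism in either category is sent to a morphism and that the unit/counit of the adjunction are compatible with all of this, which is routine but notationally heavy diagram-chasing rather than conceptually deep.
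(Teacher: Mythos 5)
Your proposal is correct and follows essentially the same route as the paper: the paper's proof is precisely the construction preceding the theorem (the functor $F\mapsto H$ with $H_\alpha=E_\alpha\otimes F_\alpha$ glued via the displayed hexagon diagrams, and the inverse $H\mapsto F$ with $F_\alpha=E^*_\alpha{\mathop{\otimes}\limits_{{\rm End}(E)|_{U_\alpha}}}H|_{U_\alpha}$), and you verify that these are mutually inverse additive functors using the fiberwise Morita identifications $E_\alpha\otimes E_\alpha^*\cong {\rm End}(E_\alpha)$ and $E_\alpha^*{\mathop{\otimes}\limits_{{\rm End}(E_\alpha)}}E_\alpha\cong \mathbb{C}_\alpha$ together with compatibility with the gluing data, which is exactly the (largely implicit) content of the paper's argument. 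Your write-up in fact supplies details the paper omits (functoriality on morphisms and the unit/counit checks), but the approach is the same.
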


This generalizes the equivalence between modules over stably trivial bundle gerbe and vector bundles given by any trivialization.
In particular, any choice of $L$-module $E$ gives rise to a particular equivalence between $L$-modules and ${\rm End}(E)$-modules.

\subsection{Morita bundle gerbe modules}

\begin{definition}
A (left) module $(E,\, \varepsilon,\, \mathcal{U})$
over a Morita bundle gerbe $(A,\, M,\, \theta,\, \mathcal{U})$
is a collection of $A_\alpha$-modules $E_\alpha\rightarrow U_\alpha$ equipped with isomorphisms
$\varepsilon_{\alpha \beta}\colon _\beta M_\alpha {\mathop{\otimes}\limits_{A_\alpha}}
E_\alpha \rightarrow E_\beta$
over $U_{\alpha \beta}$ such that diagrams
$$
\diagram
_\gamma M_\beta {\mathop{\otimes}\limits_{A_\beta}} {_\beta M_\alpha} {\mathop{\otimes}\limits_{A_\alpha}} E_\alpha \rto^{\quad 1\otimes \varepsilon_{\alpha \beta}}
\dto_{\vartheta_{\alpha \beta \gamma}\otimes 1} & _\gamma M_\beta {\mathop{\otimes}\limits_{A_\beta}}E_\beta \dto^{\varepsilon_{\beta \gamma}} \\
_\gamma M_\alpha {\mathop{\otimes}\limits_{A_\alpha}} E_\alpha \rto^{\varepsilon_{\alpha \gamma}} & E_\gamma \\
\enddiagram
$$
over $U_{\alpha \beta \gamma}$ commute.
\end{definition}

\section{Morita 2-bundle gerbes}

\subsection{Definition of Morita 2-bundle gerbes}

\begin{definition}
A \textit{Morita 2-bundle gerbe} (2-MBG for short) $(A,\, M,\, \vartheta ,\, \mathcal{U})$ is the following collection of data.
First, over all $U_{\alpha \beta}$ we have matrix algebra bundles
$A_{\alpha \beta}\rightarrow U_{\alpha \beta}$. Second, over triple overlaps
there are $(A_{\alpha \gamma}|_{U_{\alpha \beta \gamma}},\; A_{\alpha \beta}|_{U_{\alpha \beta \gamma}}
\otimes A_{\beta \gamma}|_{U_{\alpha \beta \gamma}})$-bimodules
$M_{\alpha \beta \gamma}\rightarrow U_{\alpha \beta \gamma}$ that are Morita equivalences
$A_{\alpha \beta}\otimes A_{\beta \gamma}\rightarrow A_{\alpha \gamma}$\footnote{here
and below we shall omit annoying explicit indication for restrictions to subsets.}. Then over fourfold overlaps we have diagrams
$$
\diagram
A_{\alpha \beta}\otimes A_{\beta \gamma}\otimes A_{\gamma \delta}\rto^{\quad A_{\alpha \beta}
\otimes M_{\beta \gamma \delta}}\dto_{M_{\alpha \beta \gamma}\otimes A_{\gamma \delta}} & A_{\alpha \beta}\otimes A_{\beta \delta}\dto^{M_{\alpha \beta \delta}} \\
A_{\alpha \gamma}\otimes A_{\gamma \delta}\rto_{M_{\alpha \gamma \delta}} & A_{\alpha \delta} \\
\enddiagram
$$
which commutes up to isomorphisms $\vartheta_{\alpha \beta \gamma \delta},$ i.e.
$$
\vartheta_{\alpha \beta \gamma \delta}\colon _{A_{\alpha \delta}}M_{\alpha \gamma \delta}
{\mathop{\otimes}\limits_{A_{\alpha \gamma}\otimes A_{\gamma \delta}}}(M_{\alpha \beta \gamma}
\otimes A_{\gamma \delta})_{A_{\alpha \beta}\otimes A_{\beta \gamma}\otimes A_{\gamma \delta}}
$$
$$
\Rightarrow
_{A_{\alpha \delta}}M_{\alpha \beta \delta}{\mathop{\otimes}\limits_{A_{\alpha \beta}\otimes A_{\beta \delta}}}
(A_{\alpha \beta}\otimes M_{\beta \gamma \delta})_{A_{\alpha \beta}\otimes A_{\beta \gamma}\otimes A_{\gamma \delta}}
$$
are
$(A_{\alpha \delta},\, A_{\alpha \beta}\otimes A_{\beta \gamma}\otimes A_{\gamma \delta})$-bimodule
isomorphisms. At last, over fivefold overlaps $\vartheta$'s satisfy the pentagon identity
$$
\vartheta_{\beta \gamma \delta \epsilon}\vartheta_{\alpha \beta \delta \epsilon}
\vartheta_{\alpha \beta \gamma \delta}=
\vartheta_{\alpha \beta \gamma \epsilon}\vartheta_{\alpha \gamma \delta \epsilon}.
$$
\end{definition}

Note that
$$
(A_{\alpha \beta}\otimes A_{\beta \gamma})\otimes
A_{\gamma \delta}\stackrel{\vartheta_{\alpha \beta \gamma \delta}}{\Rightarrow}
A_{\alpha \beta}\otimes (A_{\beta \gamma}\otimes A_{\gamma \delta})
$$
(different order of performing the tensor product),
so the last identity corresponds to the diagram
$$
\diagram
& ((A_{\alpha \beta}\otimes A_{\beta \gamma})\otimes A_{\gamma \delta})\otimes A_{\delta \epsilon}
\dlto_{\vartheta_{\alpha \beta \gamma \delta}} \drto^{\vartheta_{\alpha \gamma \delta \epsilon}} & \\
(A_{\alpha \beta}\otimes (A_{\beta \gamma}\otimes A_{\gamma \delta}))\otimes A_{\delta \epsilon}
\dto_{\vartheta_{\alpha \beta \delta \epsilon}} &&
(A_{\alpha \beta}\otimes A_{\beta \gamma})\otimes (A_{\gamma \delta}\otimes A_{\delta \epsilon})
\dto^{\vartheta_{\alpha \beta \gamma \epsilon}} \\
A_{\alpha \beta}\otimes ((A_{\beta \gamma}\otimes A_{\gamma \delta})\otimes A_{\delta \epsilon})
\rrto^{\vartheta_{\beta \gamma \delta \epsilon}} &&
A_{\alpha \beta}\otimes (A_{\beta \gamma}\otimes (A_{\gamma \delta}\otimes A_{\delta \epsilon})).\\
\enddiagram
$$

\begin{remark}
Let us explain the heuristics behind this definition.
One may think about a
Morita 2-bundle gerbe as a cocycle with values in the Brauer-Picard 3-group (or, equivalently,
as a functor from the \v{C}ech groupoid associated with the open cover $\mathcal{U}$ to this 3-group).
\end{remark}

Note also that in case of 2-MBG's the role of dual vector space and dual linear isomorphisms are played by
opposite algebras and dual bimodules respectively.

There are also some consequiences from the definition
that are counterparts for the ones for bundle gerbes
which allows us to coherently identify $A_{\alpha \alpha}$, $A_{\alpha \beta}$ and $M_{\alpha \beta \gamma}$
with $U_{\alpha \alpha}\times \mathbb{C}$, $A_{\beta \alpha}^o$
and $M_{\gamma \beta \alpha}^*$ respectively ($A^o$ denotes the \textit{opposite algebra} and $M^*$ the \textit{dual bimodule}).
More precisely, put $A:=A_{\alpha \beta}\otimes A_{\beta \gamma},\;\; B:=A_{\alpha \gamma},\;\; M:=M_{\alpha \beta \gamma}$.
Then $_BM_A\colon A\rightarrow B.$ By definition, $_AN_B:=M^*={\rm Hom}_A(M_A,\, A_A).$ Then
$$
_AN_B=_{B^o}N_{A^o}\colon A^o\rightarrow B^o
$$
and we have:
$$
A^o=A^o_{\alpha \beta}\otimes A_{\beta \gamma}^o\cong A_{\beta \alpha}\otimes A_{\gamma \beta}
\cong A_{\gamma \beta}\otimes A_{\beta \alpha}\stackrel{M_{\gamma \beta \alpha}}{\longrightarrow}A_{\gamma \alpha}\cong A_{\alpha \gamma}^o=B^o.
$$

\subsection{The category of Morita 2-bundle gerbes}

2-MBG's over $X$ form a weak monoidal 3-groupoid, 2-$\mathcal{MBG}(X).$ Let us define its 1-, 2-
and 3-morphisms.

\begin{definition}
\label{def1-mor2-mbg}
A \textit{1-morphism} $(A,\, M,\, \vartheta ,\, \mathcal{U})\rightarrow (A',\, M',\, \vartheta' ,\, \mathcal{U})$
is the following collection of data $(B,\, N,\, \varphi)$.

First, we have matrix algebra bundles $B_{\alpha}\rightarrow U_\alpha.$
Second, over twofold overlaps we have $(B_{\alpha}\otimes A'_{\alpha \beta},\, A_{\alpha \beta}\otimes B_{\beta})$-bimodules
$N_{\alpha \beta}\rightarrow U_{\alpha \beta}$ which are Morita equivalences
$$
N_{\alpha \beta}\colon A_{\alpha \beta}\otimes B_{\beta}\rightarrow B_{\alpha}\otimes A'_{\alpha \beta}.
$$
Third, over threefold overlaps we have diagrams
\begin{equation}
\diagram
A_{\alpha \beta}\otimes B_\beta \otimes A'_{\beta \gamma} \ddto_{N_{\alpha \beta}\otimes 1} &
A_{\alpha \beta}\otimes A_{\beta \gamma}\otimes B_\gamma \drto^{M_{\alpha \beta \gamma}\otimes 1}
 \lto_{1\otimes N_{\beta \gamma}} \\
&& A_{\alpha \gamma}\otimes B_\gamma \dlto^{N_{\alpha \gamma}} \\
B_\alpha \otimes A'_{\alpha \beta}\otimes A'_{\beta \gamma}\rto_{\quad 1\otimes M'_{\alpha \beta \gamma}} & B_\alpha \otimes A'_{\alpha \gamma}\\
\enddiagram
\end{equation}
and an isomorphism of $(A_{\alpha \beta}\otimes A_{\beta \gamma}\otimes B_\gamma ,\,
B_\alpha\otimes A'_{\alpha \gamma})$-bimodules
$$
\varphi_{\alpha \beta \gamma}\colon
(B_\alpha \otimes M_{\alpha \beta \gamma}')
{\mathop{\otimes}\limits_{B_\alpha\otimes A'_{\alpha \beta}\otimes A'_{\beta \gamma}}}
(N_{\alpha \beta}\otimes A_{\beta \gamma}')
{\mathop{\otimes}\limits_{A_{\alpha \beta}\otimes B_\beta\otimes  A'_{\beta \gamma}}}
(A_{\alpha \beta}\otimes N_{\beta \gamma})
$$
$$
\Rightarrow
N_{\alpha \gamma}
{\mathop{\otimes}\limits_{A_{\alpha \gamma}\otimes B_{\gamma}}}
(M_{\alpha \beta \gamma}\otimes B_\gamma)
$$
satisfying the obvious relations over four-fold overlaps.
\end{definition}

Note that the definition of 1-morphisms is nothing but the definition of
equivalent cocycles (with values in the Brauer-Picard 3-group in our case).

There is the obvious definition of the composition of 1-morphisms and one can verify that it is well defined.
In particular, for $(B,\, N,\, \varphi)\colon (A,\, M,\, \vartheta ,\, \mathcal{U})\rightarrow
(A',\, M',\, \vartheta' ,\, \mathcal{U})$ and
$(C,\, P,\, \psi)\colon (A',\, M',\, \vartheta' ,\, \mathcal{U})\rightarrow
(A'',\, M'',\, \vartheta'' ,\, \mathcal{U})$
we have
$$
A_{\alpha \beta}\otimes B_\beta \otimes C_\beta \stackrel{N_{\alpha \beta}\otimes 1}
{\longrightarrow}B_\alpha \otimes A'_{\alpha \beta}\otimes C_\beta
\stackrel{1\otimes P_{\alpha \beta}}{\longrightarrow}B_\alpha \otimes C_\alpha \otimes A_{\alpha \beta}''
$$
and the composition has the form $(D,\, Q,\, \chi),$
where
$$
D=\{ D_\alpha \},\; D_\alpha:=B_\alpha \otimes C_\alpha,\;\;
Q=\{ Q_{\alpha \beta}\},\; Q_{\alpha \beta}:=(B_\alpha \otimes P_{\alpha \beta})
{\mathop{\otimes}\limits_{B_\alpha \otimes A'_{\alpha \beta}\otimes C_\beta}}
(N_{\alpha \beta}\otimes C_\beta).
$$

\begin{definition}
\label{def2-mor2-mbg}
A \textit{2-morphism} $(P,\, \chi)\colon (B,\, N,\, \varphi)\Rightarrow
(B',\, N',\, \varphi'),$
where $(B,\, N,\, \varphi),\;
(B',\, N',\, \varphi')$ are 1-morphisms
$(A,\, M,\, \vartheta ,\, \mathcal{U})\rightarrow
(A',\, M',\, \vartheta' ,\, \mathcal{U})$
consists of $(B'_\alpha ,\, B_\alpha)$-bimodules $P_\alpha$
such that diagrams
$$
\diagram
A_{\alpha \beta}\otimes B_\beta \rto^{N_{\alpha \beta}}\dto_{1\otimes P_\beta} &
B_\alpha \otimes A_{\alpha \beta}' \dto^{P_\alpha \otimes 1}\\
A_{\alpha \beta}\otimes B'_\beta\rto_{N'_{\alpha \beta}} & B'_\alpha \otimes A_{\alpha \beta}'
\enddiagram
$$
commute up to isomorphisms $\chi_{\alpha \beta},$ i.e.
$$
\chi_{\alpha \beta}\colon N'_{\alpha \beta}
{\mathop{\otimes}\limits_{A_{\alpha \beta}\otimes B'_{\beta}}}
(A_{\alpha \beta}\otimes P_\beta)\cong (P_\beta \otimes A'_{\alpha \beta})
{\mathop{\otimes}\limits_{B_\alpha \otimes A'_{\alpha \beta}}}
N_{\alpha \beta}
$$
is an isomorphism of
$(B_\alpha'\otimes A'_{\alpha \beta},\, A_{\alpha \beta}\otimes B_\beta)$-bimodules.
There are further relations which are obvious.
\end{definition}

\textit{3-morphisms} between 2-morphisms $A\rightarrow A'$ are isomorphisms commuting with all structure maps. So every 3-morphism is invertible by definition. Clearly that every 2-morphism is invertible
up to 3-morphism.

The composition of 1-morphisms is associative only up to 2-morphisms and we obtain a weak 3-category
2-$\mathcal{MBG}(X)$ of Morita 2-bundle gerbes over $X$.

\subsection{3-groupoid of Morita 2-bundle gerbes}

Note that any 1-morphism is invertible (up to 2-morphism). Let us briefly
describe the construction of weak inverse $(C,\, P,\, \psi)$
for
$$
(B,\, N,\, \varphi)\colon (A,\, M,\, \vartheta ,\, \mathcal{U})\rightarrow
(A',\, M',\, \vartheta' ,\, \mathcal{U}).
$$
Put $C_\alpha :=B_\alpha^o,\; P_{\alpha \beta}:=N_{\beta \alpha}.$
Note that
$$
N_{\beta \alpha}\colon A_{\beta \alpha}\otimes B_\alpha\rightarrow
B_\beta\otimes A'_{\beta \alpha},
$$
i.e.
$$
N_{\beta \alpha}\colon B_\beta^o\otimes A'^o_{\beta \alpha}\rightarrow A^o_{\beta \alpha}\otimes B_\alpha^o,
$$
i.e.
$$
N_{\beta \alpha}\colon A'_{\alpha \beta}\otimes B_\beta^o\rightarrow B_\alpha^o\otimes A_{\alpha \beta}.
$$
So we have
$$
A_{\alpha \beta}\otimes B_\beta\otimes B^o_\beta\stackrel{N_{\alpha \beta \otimes 1}}
{\longrightarrow} B_\alpha \otimes A'_{\alpha \beta}\otimes B^o_\beta\stackrel{1\otimes P_{\alpha \beta}}
{\longrightarrow}B_\alpha \otimes B^o_\alpha\otimes A_{\alpha \beta}.
$$
Now put $Q_{\alpha \beta}:=(B_\alpha \otimes P_{\alpha \beta})
{\mathop{\otimes}\limits_{B_\alpha \otimes A'_{\alpha \beta}\otimes C_\beta}}
(N_{\alpha \beta}\otimes C_\beta).$
Then we have the diagram
$$
\diagram
A_{\alpha \beta}\otimes (B_\beta\otimes B^o_\beta)\rrto^{Q_{\alpha \beta}}\drto_{1\otimes R_\beta} &&
(B_\alpha\otimes B^o_\alpha)\otimes A_{\alpha \beta}\dlto^{R_\alpha\otimes 1} \\
& A_{\alpha \beta}\\
\enddiagram
$$
where $R_\alpha ,\, R_\beta$ are canonical Morita equivalences, etc.

Thus we see that the 3-category 2-$\mathcal{MBG}(X)$ is a weak 3-groupoid.

\subsection{Weak 4-group of Morita 2-bundle gerbes.}
\label{4gr}
There is yet another obvious operation on Morita 2-bundle gerbes, their tensor product, which
equips the category 2-$\mathcal{MBG}(X)$ with the structure of a monoidal category.

This way, we have defined a monoidal $3$-category 2-$\mathcal{MBG}(X)$ of Morita 2-bundle
gerbes. In particular, its {\it unit object} is the obvious {\it strictly trivial
Morita 2-bundle gerbe} $T.$

One can say even more about the monoidal $3$-category 2-$\mathcal{MBG}(X)$: every its object is invertible
up to 1-morphism.

Now using a standard trick \cite{Baez}, this monoidal 3-category 2-$\mathcal{MBG}(X)$ can be
reinterpreted as a weak 4-groupoid with one object, i.e. a weak 4-group
whose 1-morphisms are Morita 2-bundle gerbes (with strictly trivial gerbe as the unit and tensor product as composition),
2-morphisms are 1-morphisms between Morita 2-bundle gerbes, etc.

\subsection{Commutative Morita 2-bundle gerbes}

Consider a particular case when all algebras are one-dimensional, i.e. isomorphic to $\mathbb{C}$.
So over all
double overlaps we have trivial bundle with fiber the field $\mathbb{C}$ (the canonical
trivialization is given by $1$). Then over threefold overlaps we have line bundles (``bimodules'')
$L_{\alpha \beta \gamma}\rightarrow U_{\alpha \beta \gamma}$
and over fourfold overlaps we have isomorphisms
$$
\vartheta_{\alpha \beta \gamma \delta}
\colon L_{\alpha \gamma \delta}\otimes L_{\alpha \beta \gamma}\Rightarrow L_{\alpha \beta \delta}\otimes L_{\beta \gamma \delta}
$$
satisfying pentagon identity over fivefold overlaps.
So this is nothing but a 2-bundle gerbe.

Such commutative Morita 2-bundle gerbes over $X$ form a full subcategory
2-$\mathcal{BG}(X)$ in 2-$\mathcal{MBG}(X).$
One can show that this inclusion is an equivalence of categories.

Imitating the construction of Dixmier-Douady class
(see subsection \ref{DDclasssubsec}) one can see that such commutative Morita 2-bundle gerbes over $X$ are classified up to equivalence
(= 1-morphisms) by the group $H^4(X,\; \mathbb{Z})$
(in particular, the cocycle condition follows from the pentagon identity).
This also gives the classification of Morita 2-bundle gerbes up to equivalence.

\subsection{The group of self-equivalences of the trivial ABG}
\label{grsestabg}

It follows from definition \ref{def1-mor2-mbg} that a 1-morphism from the strictly
trivial 2-MBG $T$ to itself is the following collection of data.
First, we have algebra bundles $B_\alpha \rightarrow U_\alpha,$
then $(B_\alpha ,\, B_\beta)$-bimodules $N_{\alpha \beta}$,
then bimodule isomorphisms $\varphi_{\alpha \beta \gamma}\colon
N_{\alpha \beta}{\mathop{\otimes}\limits_{B_\beta}}N_{\beta \gamma}\rightarrow N_{\alpha \gamma}$
corresponding to diagrams
$$
\diagram
B_\gamma\rto^{N_{\beta \gamma}}\drto_{N_{\alpha \gamma}}&B_\beta \dto^{N_{\alpha \beta}}\\
& B_\alpha \\
\enddiagram
$$
which satisfy relations
$$
\varphi_{\alpha \beta \delta}\circ (1\otimes \varphi_{\beta \gamma \delta})=
\varphi_{\alpha \gamma \delta}\circ (\varphi_{\alpha \beta \gamma}\otimes 1)
$$
over four-fold overlaps.
The last relations correspond to diagrams
$$
\diagram
N_{\alpha \beta}{\mathop{\otimes}\limits_{B_{\beta}}}N_{\beta \gamma}
{\mathop{\otimes}\limits_{B_{\gamma}}}\rto^{\quad 1\otimes \varphi_{\beta \gamma \delta}}N_{\gamma \delta}
\dto_{\varphi_{\alpha \beta \gamma}\otimes 1} &
N_{\alpha \beta}{\mathop{\otimes}\limits_{B_{\beta}}}
N_{\beta \delta}\dto^{\varphi_{\alpha \beta \delta}}\\
N_{\alpha \gamma}{\mathop{\otimes}\limits_{B_{\gamma}}}
N_{\gamma \delta}\rto_{\varphi_{\alpha \gamma \delta}} & N_{\alpha \delta}. \\
\enddiagram
$$
This is exactly a Morita bundle gerbe (cf. subsection \ref{Morbung}).
Moreover, it follows from definition \ref{def2-mor2-mbg} that
2-morphisms between 1-automorphisms of the strictly trivial 2-MBG
coincide with 1-morphisms between Morita bundle gerbes.
So the group of autoequivalences of the trivial object is the 3-group of Morita bundle gerbes.
But for any such a gerbe there is a 2-morphism to a ``common'' bundle gerbe which is
unique up to equivalence.
Thus we see that {\it the group of equivalence classes (up to 2-morphisms) of 1-morphisms of
the strictly trivial 2-MBG is isomorphic to the Brauer group} $Br(X)\cong H^3(X,\, \mathbb{Z}).$

Thus we have the diagram
$$
\begin{array}{ccc}
\hbox{2-}\mathcal{MBG}(X) & \stackrel{Pic}{\mapsto} & \mathcal{MBG}(X)\\
\uparrow && \uparrow \\
\hbox{2-}\mathcal{BG}(X) & \stackrel{Pic}{\mapsto} & \mathcal{BG}(X), \\
\end{array}
$$
where vertical arrows are the inclusions of full subcategories, even equivalences of categories.
So we have some higher category version of Morita equivalence.

\subsection{Trivializations}
\label{trivabgsub}

\begin{definition}
A {\it right trivialization} of an 2-MBG
$A=(A,\, M,\, \vartheta,\, \mathcal{U})$ is a 1-morphism $(B,\, N,\, \varphi)
\colon A\rightarrow T$ to a strictly
trivial 2-MBG $T$. Similarly, a {\it left trivialization}
of $A$ is a 1-morphism $(C,\, P,\, \psi) \colon T\rightarrow A.$
\end{definition}

It immediately follows from the definition that such a right trivialization $(B,\, N,\, \varphi)$
consists of a collection of algebra bundles $B_\alpha \rightarrow U_\alpha$,
$(B_\alpha,\, A_{\alpha \beta}\otimes B_\beta)$-bimodules $N_{\alpha \beta}\colon
A_{\alpha \beta}\otimes B_\beta\rightarrow B_\alpha$
and isomorphisms
$$
\varphi_{\alpha \beta \gamma}\colon N_{\alpha \beta}
{\mathop{\otimes}\limits_{A_{\alpha \beta}\otimes B_\beta}}(A_{\alpha \beta}\otimes N_{\beta \gamma})
\Rightarrow N_{\alpha \gamma}{\mathop{\otimes}\limits_{A_{\alpha \gamma}\otimes B_\gamma}}
(M_{\alpha \beta \gamma}\otimes B_\gamma)
$$
over $U_{\alpha \beta \gamma}$ corresponding to the diagram
\begin{equation}
\label{rtrivdefa}
\diagram
A_{\alpha \beta}\otimes A_{\beta \gamma}\otimes B_\gamma
\rto^{\quad \id \otimes N_{\beta \gamma}}
\dto_{M_{\alpha \beta \gamma}\otimes \id} & A_{\alpha \beta}\otimes B_\beta
\dto^{N_{\alpha \beta}} \\
A_{\alpha \gamma}\otimes B_\gamma \rto^{N_{\alpha \gamma}} & B_\alpha \\
\enddiagram
\end{equation}
and satisfying the obvious relations over four-fold overlaps.
Similarly for a left trivialization.

\smallskip

Assume now that there are right $(B,\, N,\, \varphi) \colon A\rightarrow T$
and left $(C,\, P,\, \psi) \colon T\rightarrow A$ trivializations
of $A:=(A,\, M,\, \vartheta,\, \mathcal{U}).$
Over $U_{\alpha \beta}$ we have Morita equivalences
$$
C_\beta \otimes B_\beta\stackrel{P_{\alpha \beta}\otimes 1}{\longleftarrow}
C_\alpha \otimes A_{\alpha \beta}\otimes B_\beta\stackrel{1\otimes N_{\alpha \beta}}{\longrightarrow}
C_\alpha \otimes B_\alpha.
$$

Over threefold overlaps $U_{\alpha \beta \gamma}$ we have diagrams
\begin{equation}
\label{trivmbg}
\diagram
& C_\beta \otimes B_\beta & \\
C_\beta\otimes A_{\beta \gamma}\otimes B_\gamma
\urto^{\id \otimes N_{\beta \gamma}} \dto_{P_{\beta \gamma}\otimes \id} &
C_\alpha \otimes A_{\alpha \beta}\otimes A_{\beta \gamma}\otimes B_\gamma
\lto_{P_{\alpha \beta}\otimes \id} \dto^{\id \otimes M_{\alpha \beta \gamma}\otimes \id}
\rto^{\id \otimes N_{\beta \gamma}} &
C_\alpha \otimes A_{\alpha \beta}\otimes B_\beta \ulto_{P_{\alpha \beta}\otimes \id}
\dto^{\id \otimes N_{\alpha \beta}} \\
C_\gamma \otimes B_\gamma &
C_\alpha \otimes A_{\alpha \gamma}\otimes B_\gamma \lto_{P_{\alpha \gamma}\otimes \id}
\rto^{\id \otimes N_{\alpha \gamma}} &
C_\alpha \otimes B_\alpha \\
\enddiagram
\end{equation}
which are commutative up to isomorphisms.

Hence the algebra bundles $C_\alpha \otimes B_\alpha$ together
with the Morita-equivalences
$Q_{\alpha \beta}:=(P_{\alpha \beta}\otimes \id)\circ (\id\otimes N_{\alpha \beta })^{-1}$
and isomorphisms form a Morita bundle gerbe over $X$.
In other words, {\it two trivializations of the same Morita 2-bundle gerbe differ
by a Morita bundle gerbe}. Note that the obtained result agrees with the previous category-theoretic
arguments: the composition $B \circ C \colon T\rightarrow T$ is
a 1-automorphism of the strictly trivial Morita 2-bundle gerbe $T$, i.e. a Morita bundle
gerbe as we have already seen in subsection \ref{grsestabg}.

Let 2-$MBG(X)$ be the group of equivalence classes of 2-MBG's over $X$ (with respect to
the tensor product). Clearly, the homotopy functor $X\mapsto$2-$MBG(X)$
is representable. One can repeat the arguments in the proof of Theorem \ref{suspth}
and show that 2-$MBG(\Sigma X)\cong Br(X).$ Clearly, 2-$MBG(X)\cong [X,\, \K(\mathbb{Z},\, 4)].$

\smallskip

Thus we see that the theory of Morita (2)-bundle gerbes is equivalent to the theory of
conventional (2)-bundle gerbes. The explanation of this result comes from the fact
that the automorphism group of an invertible
$(M_k(\mathbb{C}),\, M_l(\mathbb{C}))$-bimodule is the commutative group $\mathbb{C}^*$
and therefore our cocycles $\vartheta$'s take values in it.

\begin{remark}
It is not difficult to formally define the notion of a module over a 2-MBG.
But it seems that they can not be implemented by finite-dimensional bundles (excepting trivial cases).
\end{remark}


\end{document}